\definecolor{RED}{rgb}{1,0,0}\definecolor{BLUE}{rgb}{0,0,1}\definecolor{GRAY}{rgb}{0.4,0.4,0.4}
\edef\endfrontmatter{\unexpanded\expandafter{\endfrontmatter} 
  \noexpand\endNoHyper}
\let\classAND\AND
\let\AND\relax
\let\AND\classAND
\newcommand{\minimize}{\operatorname*{minimize}}
\newcommand{\st}{\operatorname*{subject\ to}}
\newcommand{\ra}{\rightarrow}
\newcommand{\mathletter}[1]{%
\expandafter\newcommand\csname b#1\endcsname{\mathbb #1}
\expandafter\newcommand\csname c#1\endcsname{\mathcal #1}
\expandafter\newcommand\csname f#1\endcsname{\mathfrak #1}
\expandafter\newcommand\csname til#1\endcsname{\widetilde #1}
\expandafter\newcommand\csname ha#1\endcsname{\widehat #1}
\expandafter\newcommand\csname bf#1\endcsname{\bf #1}
\expandafter\newcommand\csname s#1\endcsname{\mathsf #1}
}%
\newcommand{\mydefa}[1]{\ifx#1\mydefa\else\mathletter#1\expandafter\mydefa\fi}
\newcommand{\mathletterl}[1]{%
\expandafter\providecommand\csname v#1\endcsname{\vec{#1}}
}%
\newcommand{\mydefb}[1]{\ifx#1\mydefb\else\mathletterl#1\expandafter\mydefb\fi}
\newcommand{\bea}{\begin{equation}\begin{alignedat}{-1}}
\newcommand{\ena}{\end{alignedat}\end{equation}}
\newcommand{\bee}{\begin{equation}}
\newcommand{\ene}{\end{equation}}
\renewcommand{\vec}[1]{\mathbf{#1}}
\providecommand{\diff}[1]{{\protect{#1}}}
\newtheorem{theo}{Theorem}
\newtheorem{lemma}{Lemma}
\newtheorem{remark}{Remark}
\DeclareRobustCommand{\qed}{%
  \ifmmode % if math mode, assume display: omit penalty etc.
  \else \leavevmode\unskip\penalty9999 \hbox{}\nobreak\hfill
  \fi
  \quad\hbox{\qedsymbol}}
\newcommand{\openbox}{\leavevmode
  \hbox to.77778em{%
  \hfil\vrule
  \vbox to.675em{\hrule width.6em\vfil\hrule}%
  \vrule\hfil}}
\newcommand{\qedsymbol}{\openbox}
\newenvironment{proof}[1][\proofname]{\par
  \normalfont
  \topsep6\p@\@plus6\p@ \trivlist
  \item[\hskip\labelsep\itshape
    #1.]\ignorespaces
}{%
  \qed\endtrivlist
}
\newcommand{\proofname}{Proof}
\newcommand{\T}{\mathsf{T}}
\begin{document}

\begin{frontmatter}
%\runtitle{Insert a suggested running title}  % Running title for regular 
                                              % papers but only if the title  
                                              % is over 5 words. Running title 
                                              % is not shown in output.

\title{Distributed Adaptive Newton Methods with Global Superlinear Convergence\thanksref{footnoteinfo}} % Title, preferably not more 
                                                % than 10 words.

\thanks[footnoteinfo]{This work was supported by the National Natural Science Foundation of China under grant no. 62033006, a grant from the Guoqiang Institute, Tsinghua University, and in part by the ARL under cooperative agreement W911NF-17-2-0196. The material in this paper was partially presented at the 59th IEEE Conference on Decision and Control.}
% \thanks[footnoteinfo]{}
\author[thu]{Jiaqi~Zhang}\ead{zjq16@mails.tsinghua.edu.cn},    % Add the 
\author[thu]{Keyou~You\corauthref{cor}}\ead{youky@tsinghua.edu.cn},               % e-mail address 
\author[uiuc]{Tamer~Ba\c{s}ar}\ead{basar1@illinois.edu}  % (ead) as shown

\corauth[cor]{Corresponding author}

\address[thu]{Department of Automation, and BNRist, Tsinghua University, Beijing 100084, China.}  % Please supply                                              
\address[uiuc]{Coordinated Science Laboratory, University of Illinois at Urbana-Champaign, Urbana, IL 61801 USA.}             % full addresses

\begin{keyword}                           % Five to ten keywords,  
    distributed optimization; Newton method; low-rank approximation; superlinear convergence.              % chosen from the IFAC 
\end{keyword}                             % keyword list or with the 
                                          % help of the Automatica 
                                          % keyword wizard

\begin{abstract}                          % Abstract of not more than 200 words.
    This paper considers the distributed optimization problem  where each node of a peer-to-peer network minimizes a finite sum of objective functions  by communicating with its neighboring nodes. In sharp contrast to the existing literature where the fastest distributed algorithms converge either with a global linear or a local superlinear rate, we propose a distributed adaptive Newton (DAN) algorithm with a {\em global quadratic} convergence rate. Our key idea lies in the design of a finite-time set-consensus method with Polyak's adaptive stepsize. Moreover, we introduce a low-rank matrix approximation (LA) technique to compress the innovation of Hessian matrix so that each node only needs to transmit message of dimension $\cO(p)$ (where $p$ is the dimension of decision vectors) per iteration, which is essentially the same as that of first-order methods. Nevertheless, the resulting DAN-LA converges to an optimal solution with a \emph{global superlinear} rate. Numerical experiments on logistic regression problems are conducted to validate their advantages over existing methods. 
\end{abstract}

\end{frontmatter}

\section{Introduction}\label{sec1}

Distributed optimization entails solving the following problem over a peer-to-peer network system
\bea\label{obj2}
&\minimize_{\vx_1,\dots,\vx_n}&&F(\vx_1,\dots,\vx_n)\triangleq \sum_{i=1}^{n}f_i(\vx_i)\\
&\st &&\vx_1=\cdots=\vx_n\in\bR^p
\ena
where each node $i$ privately holds a local objective function $f_i$ and updates its decision vector $\vx_i$ via communicating with its neighboring nodes. Our goal is to design efficient distributed algorithms to find an optimal solution  of \eqref{obj2}. Many efforts have been devoted along this line, see e.g., \citet{nedic2017achieving,xin2019distributed2,scaman2017optimal,qu2019accelerated}. It is known that the fastest rate for first-order methods is linear \citep{nesterov2018lectures}, and second-order methods are unavoidable for the superlinear convergence. 

Unfortunately, the existing distributed methods cannot achieve \emph{global superlinear} convergence rates. In contrast, our paper proposes two Newton-based distributed algorithms with global quadratic and superlinear convergence rates, respectively. A direct comparison with the existing literature can be found in Table \ref{tab1}. For second-order methods in \citet{mokhtari2017network,mansoori2019fast,mansoori2017superlinearly,tutunov2019distributed} and \citet{zargham2014accelerated}, an \emph{inexact} penalization reformulation as an unconstrained optimization problem is adopted to solve  \eqref{obj2}. \diff{Their superlinear rate is restricted to a limited region, which does not include an optimal solution, and then reduced to a linear rate. Even though the inexact issue has been resolved in \citet{eisen2017decentralized,eisen2018primal} and \citet{varagnolo2016newton}, their convergence rates are still linear.}
 
Under a master-slave network configuration, some distributed quasi-Newton methods have been proposed in \citet{shamir2014communication,wang2018giant,zhang2015disco}, and \citet{soori2019dave}.  Though the master node of this setting uses all the information from the slave nodes, \diff{the algorithms in \citet{shamir2014communication,zhang2015disco} and \citet{wang2018giant} still converge linearly,} and only \citet{soori2019dave} achieves a \emph{local} superlinear convergence rate if the starting point is sufficiently close to an optimal solution. 

To achieve a global superlinear convergence rate,  two bottlenecks have to be resolved. The first is the use of the linear consensus algorithm whose convergence rate is at most linear. Thus, the resulting second-order methods are still constrained to the linear convergence rate. Similar to this work,  finite-time consensus methods have been adopted in \citet{qu2019finite} to achieve local quadratic convergence by using the pure Newton direction.

The second bottleneck lies in the design of a backtracking line search method to distributedly tune stepsizes of the second-order methods. Even though the pure Newton direction with a unit stepsize is key to the superlinear convergence, it may lead to divergence if the algorithm is far from an optimal solution  \citep{boyd2004convex}. In fact, the line search in the early stage is vital to the global convergence of second-order methods and usually requires nodes to evaluate the objective function of $F$ multiple times in a single iteration, which is not possible in our distributed setting as each node $i$ can only access its individual objective function $f_i$. To circumvent this problem, \diff{local line search methods have been designed in \citet{zargham2012distributed} and \citet{jadbabaie2009distributed} for network flow problems, and fixed stepsizes have been used in \citet{mokhtari2017network,mansoori2019fast,mansoori2017superlinearly,tutunov2019distributed,mokhtari2016decentralized,eisen2018primal,varagnolo2016newton}. However, they are too conservative to achieve superlinear convergence}.

% In sharp contrast to first-order counterparts, how to access the Hessian information of $F$ for each node is essential to the design of distributed second-order methods.  \citet{wei2013distributed} proposes an innovative idea to approximate the inverse of the Hessian by a  truncated Taylor series, whose approximation error is controlled by the length of the series. However, it requires the network to run the same number of rounds of a linear consensus algorithm, and induces a heavy communication cost. 

In this work, we  resolve the above issues and design distributed algorithms with global superlinear convergence rates.  \diff{Building on the distributed flooding (DF) algorithm \citep{liu2007distributed,dias2013cooperative,li2017convergence}}, we propose a distributed selective flooding (DSF) algorithm to achieve exact consensus in a finite number of local communication rounds and improves the DF with provably lower communication complexity. Precisely, nodes in the DSF achieve consensus with at most $n-1$ rounds of local communication rather than $n+d_\cG-1$ in the DF, where $d_\cG$ is the diameter of the network with $n$ nodes.  By running the DSF at each iteration, the consensus achievement is no longer a bottleneck for superlinear convergence. 

Then, we leverage the adaptive stepsize in  \citet{polyak2019new}, which is especially amenable to the distributed setting, and  propose the Distributed Adaptive Newton algorithm (DAN)  to maintain the quadratic convergence rate from any starting point. In DAN, each node $i$ computes the gradient and Hessian of $f_i$, and then transmits to its neighbors to perform the DSF, after which the Newton direction and the adaptive stepsize are computed for updates.  Note that \citet{polyak2019new} does not address any distributed issue. 

To explicitly reduce the communication cost per iteration, we further propose a Low-rank Approximation (LA) technique to design a communication-efficient DAN-LA. The idea builds on the observation that the rank-1 matrix approximation of a symmetric matrix $W\in\bR^{p\times p}$ with respect to (w.r.t.) the spectral norm can be expressed as $s\cdot \vw\vw^\T,$ where $s\in\{-1,1\}$ and $\vw\in\bR^p$. As first-order methods \citep{shi2015extra,nedic2017achieving,scaman2017optimal,qu2019accelerated,zhang2019asynchronous}, using such an approximation of $W$ only requires transmitting a vector of dimension $\cO(p)$. Then, we apply this idea to the ``innovation'' of Hessian, which is defined as the difference between the true Hessian and its estimate. \diff{Noticeably different from the quasi-Newton methods (e.g., BFGS \citep{nesterov2018lectures,nocedal2006numerical}), here we can control the approximation error to co-design stepsizes to ensure global superlinear convergence}. Note that the truncation method in \citet{mokhtari2017network} cannot achieve a global superlinear convergence rate. Finally, we test DAN and DAN-LA on a logistic regression problem over the Covertype dataset. The results confirm the advantages of DAN and DAN-LA and validate our theoretical results.

The rest of this paper is organized as follows. Section \ref{sec2} formulates the problem. Section \ref{sec3} introduces the finite-time DSF with theoretical guarantees. Section \ref{sec4} first reviews the Newton method and Polyak's adaptive stepsize, and then proposes DAN. Section \ref{sec5} presents DAN-LA with low-rank matrix approximation methods. We test the two algorithms in Section \ref{sec6} on a logistic regression problem, and conclude the paper in Section \ref{sec7}. A conference version of this paper appears in \citet{zhang2020achieving}, where none of the proofs have been included and the DSF method in Section \ref{sec3c} does not appear. 
%Moreover, this paper provides more informative remarks and insights of the proposed algorithms with more numerical examples.

\def\arraystretch{1.2}
\begin{table*}[!t]
	\centering
	\caption{A brief summary of distributed second-order methods.}
	\label{tab1}
	\begin{minipage}{\linewidth}
		\centering
	\begin{tabular}{lll}
		\toprule
		Literature                                                                                       & Convergence rates & Main features                 \\
		\midrule
		\begin{tabular}[c]{@{}l@{}}\citet{mansoori2017superlinearly,mansoori2019fast}\\\citet{tutunov2019distributed,zargham2014accelerated}\\ \citet{mokhtari2017network}\end{tabular}  & Linear\footnote{\diff{Superlinear rate may happen in a region that does not include the optimal solution, and then the rate reduces to linear.}}, \emph{inexact}\footnote{Cannot converge to an exact optimal solution}  & \begin{tabular}[c]{@{}l@{}} Approximate the Hessian of a penalty  \\ function of \eqref{obj2} with truncated Taylor series \end{tabular} \\
		% \citep{wei2013distributed,zargham2014accelerated}                                             & Linear, \emph{inexact}  & Truncated Taylor series \\
		\midrule
		\citet{mokhtari2016decentralized,eisen2017decentralized,eisen2018primal}                                             & Linear           & Primal-dual methods                       \\
		\citet{varagnolo2016newton,zhang2020newton}                                                   & Linear           & Asymptotic average consensus                        \\
		\citet{shamir2014communication,wang2018giant}                                                 & Linear           & Master-slave networks   \\
		\citet{soori2019dave}                                                                         & Superlinear, \emph{local}\textsuperscript{\ref{footnote1}}     & Master-slave networks                        \\
		\citet{qu2019finite}                                                                          & Quadratic, \emph{local}\footnote{\label{footnote1}The starting point should be sufficiently close to the optimal solution} & Finite-time average consensus                        \\
		\midrule
		\textbf{DAN} of this work                                                                                        & Quadratic        & Adaptive stepsize and finite-time consensus                       \\
		\textbf{DAN-LA}  of this work                                                                                         & Superlinear      & Low-rank approximation to compress Hessians \\
		\bottomrule
		% \vspace{-5ex}
	\end{tabular}
	% \hspace{12pt}
\end{minipage}
% \vspace{-6ex}
\end{table*}

{\bf Notation:}~$\vx^\T$ denotes the transpose of $\vx$. $\|\cdot\|$ denotes the $l_2$-norm for vectors and the spectral norm for matrices. $|\cX|$ denotes the cardinality of set $\cX$. $\nabla f$ and $\nabla^2 f$ denote the gradient and Hessian of $f$, respectively. $\cO(\cdot)$ denotes the big-O notation. $A\succeq B$ means $A-B$ is non-negative definite. The ceiling function $\lceil x\rceil$ returns the least integer greater than or equal to $x$. A sequence $\{e_k\}$ converges to 0 with a $Q$-linear rate if $\lim_{k\ra\infty}{|e_{k+1}|}/{|e_k|}=\gamma$ for some $\gamma\in(0,1)$, a $Q$-superlinear rate if $\lim_{k\ra\infty} {|e_{k+1}|}/{|e_k|}=0$, and a $Q$-quadratic rate if $\lim_{k\ra\infty} {|e_{k+1}|}/{|e_k^2|}=\gamma$ for some $\gamma>0$. A sequence $\{\varepsilon_k\}$ converges $R$-linearly (superlinearly, quadratically) if there exists a $Q$-linearly (superlinearly, quadratically) convergent sequence $\{e_k\}$ such that $|\varepsilon_k|\leq |e_k|,\forall k$. In this paper, the convergence rates  are always in the sense of $R$-rate.

\section{Problem Formulation}\label{sec2}

The communication network among  nodes is modeled by a directed network $\cG=(\cV,\cE)$, where $\cV=\{1,\cdots,n\}$ denotes the set of nodes and $\cE\subseteq\cV\times\cV$ is the set of edges with $(i,j)\in\cE$ if and only if node $i$ can directly send messages to $j$. Let $\cN_\text{in}^i=\{j|(j,i)\in\cE\}$ denote the set of in-neighbors and $\cN_\text{out}^i=\{j|(i,j)\in\cE\}$ is the set of out-neighbors of node $i$. If $\cG$ is undirected, i.e., $(i,j)\in\cE$ implies $(j,i)\in\cE$,  we simply denote by $\cN_i$ the neighbors of node $i$. A path from node $i$ to node $j$ is a sequence of consecutively directed edges from $i$ to $j$. We say $\cG$ is \emph{strongly connected} if there exists a directed path between any pair of nodes. The distance between two nodes in $\cG$ is the number of edges in a shortest path connecting them, and the diameter $d_\cG$ of $\cG$ is the largest distance between any pair of nodes. A \emph{tree} is an undirected network that is strongly connected without any cycle.

Essentially, \eqref{obj2} is  equivalent to the following optimization problem
\bee\label{obj}
\minimize_{\vx\in\bR^p}\ f(\vx)\triangleq \sum_{i=1}^{n}f_i(\vx).
\ene 
We invoke the following assumptions in the paper.
\begin{assum}\label{assum1}
	\begin{enumerate}
		\renewcommand{\labelenumi}{\rm(\alph{enumi})}
		\item $f$ is twice continuously differentiable and $\mu$-strongly convex, i.e., there exists a positive $\mu>0$ such that $\nabla^2 f(\vx)\succeq\mu I, \forall \vx$.
		\item $f$ has  $L$-Lipschitz continuous Hessian, i.e.,
		      \bee
		      \|\nabla^2 f(\vx)-\nabla^2 f(\vy)\|\leq L\|\vx-\vy\|,\ \forall \vx,\vy.
		      \ene
	\end{enumerate}
\end{assum}
\begin{assum}\label{assum3}
	\begin{enumerate}
		\renewcommand{\labelenumi}{\rm(\alph{enumi})}
		\item The communication network is strongly connected.
		\item Each node has a unique identifier.
	\end{enumerate}
\end{assum}

Assumption \ref{assum1} is standard in the second-order methods \citep{boyd2004convex}, which implies that $f$ has a unique global minimum point $\vx^\star$, i.e., $f(\vx^\star)=\min_\vx f(\vx)$. %\cite[Theorem 2.3.2]{nesterov2018lectures}.
% Note that it is imposed on the global objective function instead of the local ones, and thus is milder than many existing works \citep{shi2015extra,nedic2017achieving,xin2019distributed2,yao2018distributed}. 
Assumption \ref{assum3} is common to handle directed networks \citep{xie2018distributed,xi2018linear} and is easily satisfied. For example, nodes are generally equipped with network interface cards (NIC) for communication, and each card is assigned to a unique MAC address. The MAC address then naturally serves as an identifier. 
% Moreover, nodes can independently generate random variables as identifiers, which meet Assumption \ref{assum3}(b) almost surely.

\section{Distributed Finite-time Set-consensus}\label{sec3}

This section first introduces the concept of distributed finite-time set-consensus that plays a crucial role in both DAN and DAN-LA. Following that, we propose a Distributed Selective Flooding (DSF) algorithm that improves the communication efficiency of the existing DF.

\subsection{Distributed Finite-time Set-consensus}

%Distributed finite-time set-consensus consists of two independent concepts, namely, set-consensus and finite-time consensus. 
Set-consensus aims to ensure that all nodes reach consensus on a set of values among nodes, which is different from value-consensus to agree on some value. Finite-time consensus achieves the consensus goal only after a \emph{finite}  number of rounds of communication with neighbors. Until now, most distributed algorithms \citep{shi2015extra,nedic2017achieving,scaman2017optimal,qu2019accelerated,zhang2019asynchronous}  to solve \eqref{obj2}  have been built on asymptotic average-consensus methods with linear iterations, which can only attain linear convergence and become a bottleneck for the design of superlinearly convergent  algorithms. To resolve this issue, we design the DSF to achieve set-consensus, which improves the DF in \citet{liu2007distributed,dias2013cooperative,li2017convergence}. 
%, the existing finite-time consensus methods \citep{yuan2013decentralised,wang2018speeding,charalambous2018laplacian,oliva2017distributed} are designed for value-consensus.

% with multiple communications per iteration. In view of its superlinear convergence with respect to the number of iterations, the overall communication complexity is still lower. 

% \begin{table}[!t]
% 	\centering
% 	\caption{A brief summary of distributed consensus algorithms.}
% 	\label{tab2}
% 	\begin{tabular}{lcc}
% 		\toprule
% 		Consensus & Asymptotic                                                             & Finite-time                                                                                                             \\
% 		\midrule
% 		Value     & FDLA\citep{xiao2004fast}, \citep{liu2013analysis,li2011distributed} & \citep{charalambous2018laplacian,wang2018speeding,yuan2013decentralised}, FAIM \citep{oliva2017distributed}, etc. \\
% 		Set       & --                                                                     & Flooding \citep{oliva2017distributed,liu2007distributed,dias2013cooperative,li2017convergence}, \textbf{DSF}                                               \\
% 		\bottomrule
% 	\end{tabular}
% \end{table}

\subsection{The Distributed Selective Flooding Algorithm}\label{sec3c}

% This section provides the Distributed Selective Flooding (DSF) algorithm with its communicational complexity.

The DSF is given in Algorithm \ref{DSF}, where $\cI_i(t)$ contains the information that node $i$ has received after $t$ communication rounds, and is updated iteratively.  The implementation for undirected and directed networks are slightly different. We first focus on the former case. Let $\cS_i$ be the information to be shared by node $i$, which can be a scalar, a vector, a matrix, or a set. Each node $i$ initializes a set $\cI_i(0)=\{\cS_i\}$. At the $k$-th round, for each neighbor $j$, node $i$ selects an element $e\in\cI_i(k-1)$ \diff{such that (a) node $i$ has not sent $e$ to node $j$, and (b) node $i$ has not received $e$ from node $j$ before. Then, node $i$ sends $e$ to node $j$ (Line \ref{line3:dsf})}. Meanwhile, it receives an element $e_j$ from node $j$ and copies to $\cI_i(k-1)$, i.e., $\cI_i(k)=\cI_i(k-1)\cup\{e_j | j\in\cN_i\}$ (Line \ref{line6:dsf}). 

We show in Theorem \ref{theo1} that each node $i$ of DSF obtains a set containing the information of all nodes  after $n-1$ rounds of communication, i.e., $\cI_i(n-1)=\{\cS_u|u\in\cV\}$. That is, nodes reach set-consensus in finite time. In DSF, each message sent from node $i$ to node $j$ is `new' to node $j$, and hence no messages shall be repeatedly transmitted over any link $(i,j)$. \diff{To this end, unique identifiers are needed to distinguish messages (c.f. Assumption \ref{assum3}(b)), which requires additional $\lceil \log_2(n)\rceil$ bits of memory for each message than iterative consensus methods (e.g. \citet{olfati2004consensus}). On a positive note, however, it is less than 16 bits even for $n\leq 6\times 10^5$}. 
% If only an upper bound $\tilde{n}$ on the number of nodes is available,  the number of communication rounds increases to $\tilde{n}-1$. 

For directed networks, the in-neighbors of a node can be different from its out-neighbors. It is infeasible to check whether an element $e$ has been received from an out-neighbor $j$. Then, the DSF is modified and a node selects $e$ that is not sent to its out-neighbor $j$, i.e., Line 3 is replaced by Line 5 in Algorithm \ref{DSF}. However, this slows down the consensus seeking and duplicate transmissions may happen. In fact, the DSF in this case reduces to the DF, see Remark \ref{remark2} for details.

%We note that the name `selective flooding' has also been adopted in some variants of the standard flooding algorithm \citep[e.g.][]{bertsekas1992data}, but most of them are for routing that guides packages from the origin to their correct destination through communication networks, which are different from set-consensus. The following theorem characterizes the communication complexity of the DSF.

\begin{algorithm}[!t]
	\caption{The DSF --- from the viewpoint of node $i$}\label{DSF}
	\begin{algorithmic}[1]
		\REQUIRE A message $\cS_i$, which can be a scalar, a vector, a matrix, or a set, and let $\cI_i(0)=\{\cS_i\}$.
		\FOR {$k=1,2,\cdots,n-1$}
		\IF {$\cG$ is undirected}
		\STATE\label{line3:dsf} For each node $j\in\cN_i$, \diff{node $i$ selects an element $e\in\cI_i(k-1)$ such that (a) node $i$ has not sent $e$ to node $j$, and (b) node $i$ has not received $e$ from node $j$ before. Then, node $i$ sends $e$ to node $j$.}
		% node $i$ selects an element $e\in\cI_i(k-1)$ that has not been sent to or \emph{received} from  node $j$, and sends $e$ to node $j$.
		\ELSIF {$\cG$ is directed}
		\STATE For each node $j\in\cN_{\text{out}}^{i}$, node $i$ selects an element $e\in\cI_i(k-1)$ that has not been sent to node $j$ \diff{from node $i$}, and sends $e$ to node $j$.
		\ENDIF
		\STATE\label{line6:dsf} Node $i$ receives an element $e_j$ from each neighbor $j$ and copies to $\cI_i(k-1)$, i.e.,
		\bee
		\cI_i(k)=\cI_i(k-1)\cup\{e_j | j\in\cN_i\}.
		\ene
		\ENDFOR
		\ENSURE Each node obtains a set $\cI_i(n-1)=\{\cS_u|u\in\cV\},\forall i\in\cV$ which contains messages of all nodes.
	\end{algorithmic}
\end{algorithm}

\begin{theo}\label{theo1}
	Suppose Assumption \ref{assum3} holds. If $\cG$ is a tree, then each node in Algorithm \ref{DSF} obtains a set containing all nodes' messages after $n-1$ rounds of local communication with its neighbors, i.e., $\cI_i(n-1)=\{\cS_u|u\in\cV\},\forall i\in\cV$. If $\cG$ is directed, then the maximum number of communication rounds is $n+d_\cG-1$.
\end{theo}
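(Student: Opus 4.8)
The plan is to reduce the tree claim to a single per-edge ``no-stall'' lemma, and to handle the directed claim by reducing the algorithm to the DF. For the tree case, fix any node $v$. Deleting $v$ splits the tree into subtrees hanging off its neighbors; let $T^{(m)}$ be the subtree rooted at neighbor $m$, so the $T^{(m)}$ partition $\cV\setminus\{v\}$ and $\sum_m |T^{(m)}| = n-1$. Because a tree has a unique path between any two nodes, every message $\cS_u$ with $u\in T^{(m)}$ can reach $v$ only across the single edge $(m,v)$. Hence it suffices to show that node $m$ delivers all $|T^{(m)}|$ messages of $T^{(m)}$ to $v$ within $|T^{(m)}|\le n-1$ rounds; taking the maximum over the neighbors then gives $\cI_v(n-1)=\{\cS_u:u\in\cV\}$, and since $\cI_v$ only grows the claim follows.

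Everything therefore reduces to the following lemma, stated for each directed edge $i\to j$ of the tree. Let $T_i$ be the component containing $i$ after deleting $(i,j)$ and set $a=|T_i|$. Claim: for each $k=1,\dots,a$ node $i$ sends a new $T_i$-message to $j$ in round $k$, so $j$ holds all of $T_i$ after round $a$. Two structural facts make this tractable. First, the only messages that can ever cross $i\to j$ are those originating in $T_i$: any message from the far side reaches $i$ only through $j$, so every non-$T_i$ message in $\cI_i$ was received from $j$ and is barred from being sent back by rule (b); thus the admissible elements for $j$ are exactly the unsent $T_i$-messages. Second, since each node sends at most one element per neighbor per round, the transmissions on $i\to j$ are decoupled from $i$'s traffic to and from its remaining neighbors.

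I would prove the lemma by strong induction on $a$. The base case $a=1$ (node $i$ a leaf) is immediate: $i$ sends $\cS_i$ in round $1$. For the step, let $c_1,\dots,c_r$ be the neighbors of $i$ lying in $T_i$, with subtrees $T_{c_1},\dots,T_{c_r}$, each of size $<a$ and with $\sum_l |T_{c_l}|=a-1$. Applying the induction hypothesis to each edge $c_l\to i$, node $i$ has received at least $\min(t,|T_{c_l}|)$ distinct $T_{c_l}$-messages after $t$ rounds, so after round $k-1$ it knows at least $1+\sum_l \min(k-1,|T_{c_l}|)$ messages of $T_i$. The elementary inequality $\sum_l \min(s,a_l)\ge \min\bigl(s,\sum_l a_l\bigr)$ (verified by splitting on whether some $a_l\ge s$) yields $1+\sum_l\min(k-1,|T_{c_l}|)\ge 1+\min(k-1,a-1)=k$ for every $k\le a$. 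As only $k-1$ messages were sent to $j$ in the earlier rounds, an unsent $T_i$-message is available in round $k$, and the algorithm forwards one; this closes the induction and shows all $a$ messages reach $j$ by round $a$.

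The main obstacle is precisely this no-stall lemma: one must argue that the tree structure genuinely decouples the per-edge flows (sending toward the parent never competes with the child traffic, and only $T_i$-messages ever cross $i\to j$), and that the greedy, otherwise-arbitrary choice of which admissible element to forward is irrelevant—only the \emph{existence} of an admissible element each round matters, which is exactly what the counting inequality supplies. For the directed case, rule (b) cannot be enforced because in-neighbors and out-neighbors differ, so the DSF degenerates into plain flooding (the DF; cf.\ Remark \ref{remark2}), for which stalls on an edge are possible and the per-edge argument above no longer applies. There I would invoke the known DF completion bound $n+d_\cG-1$ from \citet{liu2007distributed,dias2013cooperative,li2017convergence}, whose intuition is that a message needs at most $d_\cG$ hops to propagate while at most $n-1$ messages can congest any single link.
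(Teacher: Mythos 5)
Your argument is correct, and it reaches the tree claim by a genuinely different induction than the paper's. The paper roots the tree at an arbitrary node and inducts on the \emph{round index}: its Claim 1 upper-bounds what can have crossed a child edge ($|\cU_v(k)\cap\cR_u|\le k-1$), and its Claim 2 shows $|\widetilde\cU_v(k)|\ge\min\{|\cR_v|,k\}$ for all nodes simultaneously, i.e., every node gains a fresh descendant message per round until saturated; setting the root to each $i$ and $k=n-1$ finishes. You instead isolate a per-edge no-stall lemma and prove it by strong induction on the \emph{component size} $a=|T_i|$, using $\sum_l\min(s,a_l)\ge\min\bigl(s,\sum_l a_l\bigr)$ to turn the children's delivery guarantees into an inventory count: at round $k\le a$, node $i$ knows at least $k$ messages of $T_i$ but has sent at most $k-1$, so an unsent one exists. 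Both proofs run on the same engine — the unique tree path forces all $T_i$-traffic through $(i,j)$, and rules (a)--(b) forbid any duplicate from crossing an edge, so existence of an unsent message implies progress regardless of which admissible element the algorithm picks — and you correctly verify the one point where this could fail, namely that an unsent $T_i$-message automatically satisfies rule (b) because $i$ can never receive a $T_i$-message from $j$ (this is the same observation the paper buries inside its Claim 2 case analysis). What your route buys is modularity: each edge is analyzed independently of the rest of the traffic, the quantitative statement is exactly tight (a leaf edge with $|T_i|=n-1$ needs all $n-1$ rounds, recovering the optimality noted in Remark \ref{remark1}), and no global rooting is needed in the lemma itself; what the paper's route buys is avoiding the min-sum inequality by treating all nodes at once in a single time induction. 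For the directed claim you defer to the literature exactly as the paper does — it cites \citet[Theorem 1]{oliva2017distributed} for the $n+d_\cG-1$ bound — so you should cite that result specifically (the DF papers you list introduce the method but the completion-round bound is established in \citet{oliva2017distributed}); your congestion heuristic there is intuition, not a proof, and should not be presented as one.
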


%Let each node $i$ in the networks $\cG$ privately hold a message $\cS_i$ and apply the DSF algorithm (Algorithm \ref{DSF}).  

The proof is relegated to Appendix \ref{apx}. Although Theorem \ref{theo1} only considers undirected trees, the result also holds for other types of undirected networks where we can first construct a spanning tree in a distributed manner and then apply the DSF on the selected tree.

\diff{
\begin{remark}\label{remark3}
	A spanning tree can be constructed in a distributed way via a simple breadth-first search, see e.g. \citet[Section 4.2]{lynch1996distributed}, where $\cO(d_{\cG})$ communication rounds are needed and the total number of communication bits of all nodes is at most $\cO(|\cE|\log(n))$ for undirected graphs. Since we focus on static networks,  the tree only needs to be constructed  once. The DSF is also resilient against dropping some edges if the resulting time-varying graphs always contain a fixed spanning tree.  As Theorem 1 dictates, the essential requirement of the graph lies in the existence of a spanning tree.  Whether the network conditions can be relaxed  to those supporting iterative consensus \citep{olfati2004consensus} is not addressed in this work.
		% There are also construction methods for dynamic networks \citep{mooij2004adistributed} or asynchronous communications \citep{johansen1987distributed}.
\end{remark}
}

% \begin{remark}[Spanning tree construction]\label{remark3}
% 	The following procedure constructs a spanning tree in a distributed manner. First, a root node is selected using the max-consensus method. Then, the root node sends a pre-defined message to all of its neighbors. Once a node receives this message at the first time, it chooses the sender as parent and transmits the message to its neighbors. This process generates a breadth-first spanning tree where each node communicates $d_{\cG}+1$ times with neighbors. Note that the construction needs to be implemented only once for a static network. There are also construction methods for dynamic networks \citep{mooij2004adistributed,pandurangan2019time}.
% \end{remark}

\begin{remark}[Optimality for trees]\label{remark1}
	% We show that the minimum number of communication rounds for a tree to achieve set-consensus cannot be smaller than $n-1$, which implies the optimality of the DSF. 
	Since there exists at least one node with only one neighbor in a tree, such a node has to receive $n-1$ transmissions from its neighbor to achieve set-consensus. Thus, the set-consensus needs at least $n-1$ number of transmissions, showing the optimality of Theorem \ref{theo1}.
\end{remark}

\begin{remark}[Improvement over the DF]\label{remark2}
	The DF is designed for directed networks and is equivalent to our DSF for directed networks. However, the DSF improves the DF on undirected networks by eliminating the duplicate transmissions of the same message over an edge. Since the diameter of a line network is $d_\cG=n-1$, the DSF only requires a half number of communications than that of the DF.  Despite the widespread use of the DF,
	% In some applications, the communication time can be the dominant part of the total training time \citep{lian2017can}, and hence a half reduction is significant. 
	its minimum number of communication rounds has only recently been established \citep[Theorem 1]{oliva2017distributed}. 
	% Another modification of the DF \citep{li2017convergence}, though converges in $d_\cG$ communication rounds, demands multiple messages' transmission at each iteration with possibly excessive redundancy, and generally has a higher global communication complexity.
\end{remark}

% The DSF is a key component of the superlinearly  convergent algorithms in the next sections, especially for the communication-efficient DAN-LA.

\section{The Distributed Adaptive Newton Method}\label{sec4}
We first review the centralized Newton method with a backtracking line search, and reveals its difficulty in the distributed setting. Then, we introduce an adaptive Newton method \citep{polyak2019new}, which does not require any line search while maintaining a quadratic convergence rate. Finally, we integrate the adaptive Newton method with the DSF to propose the Distributed Adaptive Newton method (DAN) and provide its convergence result.

\subsection{The  Newton Method}\label{sec4a}

The  Newton method has the following update rule
\bee\label{newton}
\vx_{k+1}=\vx_{k}-\alpha_k(\nabla^2 f(\vx_k))^{-1}\nabla f(\vx_k).
\ene
If the stepsize $\alpha_k=1$, then \eqref{newton} is the \emph{pure} Newton method, which converges quadratically only when the starting point $\vx_0$ is sufficiently close to $\vx^\star$, i.e., $\|\vx_0-\vx^\star\|\leq\mu^2/L$ \citep{nesterov2018lectures}. If $\vx_0$ is far from $\vx^\star$, the pure Newton method may diverge.

% To obtain global convergence, \emph{damped} Newton methods that adjust the stepsize at each iteration are preferable. \citet{nesterov2018lectures} proposed using $\alpha_k=\Big(1+\sqrt{\nabla f(\vx_k)^\T(\nabla^2 f(\vx_k))^{-1}\nabla f(\vx_k)}\Big)^{-1}$ and achieves a global quadratic convergence rate for self-concordant objective function  $f$. However, self-concordant functions impose constraints on the third derivative of $f$, and hence can be restrictive. In contrast, this work focuses on general convex functions with the standard Assumption \ref{assum1}.

To ensure global convergence,  $\alpha_k$ in \eqref{newton} is usually determined by a backtracking line search method. For example, the popular Armijo rule chooses $\alpha_k=\beta^l$ where $\beta\in(0,1)$ and $l$ is the smallest nonnegative integer satisfying
\bee\label{eq_armijo}
f(\vx_k+\beta^l\vd_k)-f(\vx_k)\leq \gamma\beta^l\nabla f(\vx_k)^\T\vd_k
\ene
where $\gamma\in(0,1)$ and $\vd_k=(\nabla^2 f(\vx_k))^{-1}\nabla f(\vx_k)$ is the Newton direction. Even though this often promises a global superlinear convergence rate,  it needs at least $\cO((l+1)(n-1))$ communication rounds in the distributed setting per iteration, leading to a huge communication overhead.  If $\vx_k$ is far away from an optimal point $\vx^\star$, then $l$ in \eqref{eq_armijo} is often large, and thus the line search is not suitable for distributed implementation.

% To our knowledge, all existing Newton-type methods with a globally superlinear convergence rate, except a very recent one \citep{polyak2019new}, adopt some form of line search methods, and thus are difficult to adapt to the distributed setting.  In the next subsection, we introduce the method in \citep{polyak2019new} and show how it addresses this issue.

\subsection{The Polyak's Adaptive Newton Method}

In \citet{polyak2019new}, a  damped Newton method  with  adaptive stepsizes has been proposed  to solve nonlinear equations, and has a basic form as follows
\bea\label{ada_nw}
\alpha_k&=\min\Big\{1,\frac{\mu^2}{L\|\nabla f(\vx_k)\|}\Big\}\\
\vx_{k+1}&=\vx_{k}-\alpha_k(\nabla^2 f(\vx_k))^{-1}\nabla f(\vx_k)
\ena
where $\mu$ and $L$ are  defined in Assumption \ref{assum1}.

The idea in \eqref{ada_nw} is very natural. When $\vx_k$ is far from the optimal point $\vx^\star$, the algorithm is in the damped Newton phase with the stepsize inversely proportional to the size of $\nabla f(\vx_k)$. When $\vx_k$ is close to  $\vx^\star$, then \eqref{ada_nw} switches to the pure Newton method. Even though it does not involve any line search, it still achieves a global quadratic convergence rate. Particularly, the number of iterations to achieve $\|\vx_k-\vx^\star\|\leq\epsilon$ is $O\big(\log_2\log_2({1}/{\epsilon})\big)$, which matches the theoretical rate of the Newton method with line search in \citet{boyd2004convex}.

\subsection{The Distributed Adaptive Newton Method}

We use the DSF to extend the above idea to propose DAN in Algorithm \ref{DAN} to solve the distributed optimization problem.  At each iteration, each node $i$ computes the local gradient and Hessian of $f_i$ (Line \ref{line2:dan}), and then runs the DSF to obtain the gradient and Hessian of $f$ (Lines \ref{line3:dan} and \ref{line4:dan}). 
% Note that other finite-time average-consensus methods \citep[e.g.][]{charalambous2018laplacian,wang2018speeding,yuan2013decentralised} can also be adopted. 
Finally, each node uses the  aggregated gradient and Hessian to perform a Newton step with the adaptive Polyak's  stepsize (Line \ref{line5:dan}). For brevity, all nodes are assumed to start from the same point $\vx_0$, which can  be relaxed by adding a finite-time value-consensus step. %Theorem \ref{theo2} provides the convergence result of DAN. 

% Due to finite-time consensus, the global view of DAN is identical to Polyak's adaptive Newton method \eqref{ada_nw}. 

\begin{algorithm}[!t]
	\caption{The Distributed Adaptive Newton method (DAN) --- from the view of node $i$}\label{DAN}
	\begin{algorithmic}[1]
		\REQUIRE Starting point $\vx_{i}^{0}=\vx_0,\forall i$.
		\FOR {$k=0,1,2,\cdots$}
		\STATE\label{line2:dan} Compute $\vg_k^{(i)}=\nabla f_i(\vx_k^{(i)})$ and $H_k^{(i)}=\nabla^2 f_i(\vx_k^{(i)})$.
		\STATE\label{line3:dan} Obtain $\cS=\{(\vg_k^{(u)},H_k^{(u)}),u\in\cV\}$ by performing the DSF (Algorithm \ref{DSF}) via $n-1$ communication rounds with neighbors.
		\STATE\label{line4:dan} Compute the global gradient and Hessian $\bar\vg_k=\sum_{u=1}^n\vg_k^{(u)}$ and $\bar H_k=\sum_{u=1}^nH_k^{(u)}$.
		\STATE\label{line5:dan} Let $\alpha_{k}^{(i)}=\min\Big\{1,\frac{\mu^2}{L\|\bar\vg_k\|}\Big\}$
		and update
		$$
		\vx_{k+1}^{(i)}=\vx_{k}^{(i)}-\alpha_k^{(i)} (\bar H_{k})^{-1}\bar\vg_{k}.
		$$
		\ENDFOR
	\end{algorithmic}
\end{algorithm}

\begin{theo}\label{theo2}
	Suppose that Assumptions \ref{assum1} and \diff{\ref{assum3}} hold. Then, all sequences $\{\vx_k^{(i)}\},i\in\cV$ in Algorithm \ref{DAN} converge to an optimal point $\vx^\star$, and $\|\nabla f(\vx_k^{(i)})\|$ is monotonically decreasing.  Moreover, let
	\bea\label{eq1_lemma1}
	k_0&=\max\left\{0,\Big\lceil\frac{2L}{\mu^2}\|\nabla f(\vx_0)\|\Big\rceil-2\right\},\\
	\gamma&=\frac{L}{2\mu^2}\|\nabla f(\vx_0)\|-\frac{k_0}{4}\in[0,\frac{1}{2}).
	\ena
	% $k_0$ and $\gamma$ be as defined in \eqref{eq1_lemma1}. 
	Then, it holds that
	\bee
	\|\nabla f(\vx_k^{(i)})\|\leq\begin{cases}
		\|\nabla f(\vx^0)\|-\frac{\mu^2}{2L}k, & k\leq k_0 \\[4pt]
		\frac{2\mu^2}{L} \gamma^{2^{(k-k_0)}}, & k>k_0
	\end{cases}
	\ene
	and
	\bee
	\|\vx_k^{(i)}-\vx^\star\|\leq\begin{cases}
		\frac{\mu}{L}(k_0-k+\frac{2\gamma}{1-\gamma}),  & k\leq k_0, \\[4pt]
		\frac{2\mu\gamma^{2^{(k-k_0)}}}{L(1-\gamma^{2^{(k-k_0)}})}, & k>k_0.
	\end{cases}
	\ene
\end{theo}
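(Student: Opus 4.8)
The plan is to first strip away the distributed aspect entirely. By Theorem~\ref{theo1}, after the $n-1$ rounds of the DSF in Line~\ref{line3:dan} every node $i$ holds the complete set $\cS=\{(\vg_k^{(u)},H_k^{(u)}):u\in\cV\}$, so the aggregates formed in Line~\ref{line4:dan} are the \emph{exact} global gradient and Hessian, i.e.\ $\bar\vg_k=\nabla f(\vx_k)$ and $\bar H_k=\nabla^2 f(\vx_k)$ evaluated at the common iterate. Since all nodes start from the same $\vx_0$ and thereafter apply identical updates, we have $\vx_k^{(i)}=\vx_k$ for every $i\in\cV$ and $k$, and the DAN recursion collapses exactly onto the centralized scheme \eqref{ada_nw}. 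It then suffices to establish the stated bounds for the scalar sequence $e_k:=\|\nabla f(\vx_k)\|$ produced by \eqref{ada_nw}.

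The core estimate comes from expanding $\nabla f(\vx_{k+1})$ via the integral form of Taylor's theorem and cancelling the Newton term $\nabla^2 f(\vx_k)\vd_k=\nabla f(\vx_k)$. Bounding the residual with Assumption~\ref{assum1}(b) and using $\nabla^2 f\succeq\mu I$ from Assumption~\ref{assum1}(a) to control $\|\vd_k\|\le e_k/\mu$, I would obtain
\[
e_{k+1}\le (1-\alpha_k)e_k+\frac{L\alpha_k^2}{2\mu^2}e_k^2 .
\]
Substituting the adaptive stepsize splits the analysis into two regimes: when $e_k>\mu^2/L$ one has $\alpha_k=\mu^2/(Le_k)<1$ and the inequality reduces to the \emph{arithmetic} decrease $e_{k+1}\le e_k-\mu^2/(2L)$; when $e_k\le\mu^2/L$ one has $\alpha_k=1$ and it becomes the \emph{quadratic} contraction $e_{k+1}\le \tfrac{L}{2\mu^2}e_k^2\le e_k/2$. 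Both cases strictly decrease $e_k$, which yields the asserted monotonicity of $\|\nabla f(\vx_k^{(i)})\|$, and the second shows the pure-Newton phase is absorbing.

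For the gradient rate I would show by induction that the arithmetic estimate $e_k\le e_0-\tfrac{\mu^2}{2L}k$ holds for every $k\le k_0$, even if the quadratic phase is entered before $k_0$; the point is that $e_0-\tfrac{\mu^2}{2L}k$ stays above $\mu^2/(2L)$ throughout $k\le k_0$, so the fast-decaying quadratic tail cannot violate it. At $k=k_0$ this gives $\tfrac{L}{2\mu^2}e_{k_0}\le \gamma$, which is precisely the calibrated definition of $\gamma$, and $e_{k_0}\le\mu^2/L$ places us in the pure phase. Iterating $v_{k+1}\le v_k^2$ with $v_k:=\tfrac{L}{2\mu^2}e_k$ then produces $v_{k_0+m}\le\gamma^{2^m}$, i.e.\ $e_k\le\tfrac{2\mu^2}{L}\gamma^{2^{(k-k_0)}}$ for $k>k_0$, the second branch.

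For the distance-to-optimum bound I would not use $\|\vx_k-\vx^\star\|\le e_k/\mu$ directly but instead telescope the step lengths, $\|\vx_k-\vx^\star\|\le\sum_{j\ge k}\|\vx_{j+1}-\vx_j\|$. In the damped phase each step obeys $\|\vx_{j+1}-\vx_j\|=\alpha_j\|\vd_j\|\le \mu/L$ (the factor $e_j$ cancels), so the $k_0-k$ damped steps contribute $\tfrac{\mu}{L}(k_0-k)$; in the pure phase $\|\vx_{j+1}-\vx_j\|\le e_j/\mu$, and summing the doubly-exponential tail through $\sum_{m\ge0}\gamma^{2^m}\le\gamma/(1-\gamma)$ (respectively $\sum_{i\ge m}\gamma^{2^i}\le \gamma^{2^m}/(1-\gamma^{2^m})$) reproduces exactly the $\tfrac{2\gamma}{1-\gamma}$ and $\tfrac{\gamma^{2^{(k-k_0)}}}{1-\gamma^{2^{(k-k_0)}}}$ factors of the two branches; convergence $\vx_k\to\vx^\star$ follows since $e_k\to0$. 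I expect the main obstacle to be the phase-transition bookkeeping of the third paragraph: verifying that $k_0$ is the correct crossover count, that $\gamma\in[0,\tfrac{1}{2})$, and that the arithmetic bound survives for all $k\le k_0$ despite a possible early switch, so that the two branches glue consistently at $k=k_0$.
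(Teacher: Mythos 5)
Your proposal is correct, and its opening move --- the induction showing $\vx_k^{(1)}=\cdots=\vx_k^{(n)}$ so that DAN collapses exactly onto the centralized recursion \eqref{ada_nw} --- is in fact the paper's \emph{entire} proof: after that reduction the paper simply invokes Theorem 4.1 of \citet{polyak2019new} for all of the stated bounds. Where you genuinely differ is that you re-derive the cited centralized result from scratch, and your reconstruction checks out: the key inequality $e_{k+1}\le(1-\alpha_k)e_k+\frac{L\alpha_k^2}{2\mu^2}e_k^2$ from the integral Taylor expansion; the arithmetic decrease $e_{k+1}\le e_k-\frac{\mu^2}{2L}$ when $e_k>\mu^2/L$; the absorbing quadratic contraction $e_{k+1}\le\frac{L}{2\mu^2}e_k^2\le e_k/2$ when $e_k\le\mu^2/L$; the calibration $\frac{L}{2\mu^2}e_{k_0}\le\gamma$ matching \eqref{eq1_lemma1}; and the telescoped distance bounds, using $\alpha_j\|\vd_j\|\le\mu/L$ in the damped phase (the $e_j$ factor indeed cancels) and $\sum_{i\ge m}\gamma^{2^i}\le\gamma^{2^m}/(1-\gamma^{2^m})$, which follows from $2^{i-m}\ge i-m+1$. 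The phase-transition bookkeeping you flag as the main risk also survives scrutiny: since $k_0=\max\{0,\lceil\frac{2L}{\mu^2}\|\nabla f(\vx_0)\|\rceil-2\}$, one has $\frac{2L}{\mu^2}\|\nabla f(\vx_0)\|-k_0>1$, so the linear envelope $e_0-\frac{\mu^2}{2L}k$ stays above $\frac{\mu^2}{2L}$ for all $k\le k_0$ and an early switch into the (absorbing) pure-Newton phase cannot break the induction. The tradeoff between the two routes is simply length versus self-containment: the paper outsources the hard analysis to Polyak's theorem, while your version makes the theorem's proof fully explicit within the paper's own assumptions.
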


\begin{proof}
	One can  use mathematical induction to easily show that $\vx_k^{(1)}=\dots=\vx_k^{(n)}$ for all $k$, i.e., all nodes' states are identical at any time. Let $\bar\vx_k\triangleq\vx_k^{(1)}=\dots=\vx_k^{(n)}$. Then, each node actually performs the following update
	\bea
	\alpha_k&=\min\Big\{1,\frac{\mu^2}{L\|\nabla f(\bar\vx_k)\|}\Big\}\\
	\bar\vx_{k+1}&=\bar\vx_{k}-\alpha_k(\nabla^2 f(\bar\vx_k))^{-1}\nabla f(\bar\vx_k)
	\ena
	which is exactly Polyak's adaptive Newton method \eqref{ada_nw}. Thus, the result follows from \citet[Theorem 4.1]{polyak2019new}.
\end{proof}

By Theorem \ref{theo2},  $\|\nabla f(\vx_k^{(i)})\|$ and $\|\vx_k^{(i)}-\vx^\star\|$ in each node of DAN decrease by at least a constant positive value at each iteration when $k\leq k_0$, after which the decreasing rate becomes quadratic. Specifically, the number of iterations for $\|\vx_k^{(i)}-\vx^\star\|\leq\epsilon$ in each node is given by
\bee\label{eq1_sec4}
k_0+\log_2\log_{(1/\gamma)}\big({4\mu}/{L\epsilon}\big)=O\big(\log_2\log_2({1}/{\epsilon})\big).
\ene

It is worth noting that the above quadratic rate is \emph{global} and \emph{exact}, which is in sharp contrast with the existing distributed second-order methods \citep{qu2019finite,mokhtari2017network,mansoori2019fast,mansoori2017superlinearly,tutunov2019distributed,zargham2014accelerated}.

\begin{remark}[Communication complexity]
	Although each node in  DAN involves $n-1$ rounds of communicating messages of size $\cO(p^2)$ per iteration, the overall communication complexity could be much lower than that of first-order algorithms. In particular, the communication complexity in each node to achieve $\|\vx_k^{(i)}-\vx^\star\|\leq\epsilon$ is $O\big(np^2\log_2\log_2({1}/{\epsilon})\big)$ for DAN and  is $\cO(d_\cG p\log_2{1}/{\epsilon})$ for the optimal first-order algorithm \citep{scaman2017optimal}. The latter is higher than the former if a high precision is desired, i.e., $\epsilon\leq \cO(2^{-\frac{np}{d_\cG}})$.  \diff{A similar analysis can also be made for the computational complexity.}
	% In addition, one may replace the DSF in DAN with other finite-time consensus algorithms for possibly lower communication complexity, since the DSF is a more powerful set-consensus algorithm specifically designed for the communication-efficient DAN-LA in the next section. 
\end{remark}

% \begin{remark}[Importance of finite-time consensus]
% 	As pointed out in Section \ref{sec3}, existing asymptotic consensus algorithms can only achieve linear convergence, which becomes a bottleneck of such distributed optimization algorithms since consensus is a necessary condition for convergence. Hence, the use of finite-time consensus is indispensable.
% \end{remark}

% \begin{remark}[Stepsize selcetion]
% 	The stepsize used in DAN depends on the $\mu$ and $L$. If a node only knows $\mu_i$ and $L_i$ of the local function $f_i$, then it can simply set $\mu=\sum_{i=1}^{n}\mu_i$ and $L=\sum_{i=1}^{n}L_i$, which can be obtained using any finite-time consensus method such as the DSF.
% \end{remark}

Despite the global quadratic convergence, DAN may progress relatively slow in the early stage due to the use of small stepsizes in the damped Newton phase ($k\leq k_0$) and the DSF in Line \ref{line3:dan} of Algorithm \ref{DAN}. \diff{In practice, we can start at a first-order method for a few iterations and then switch to DAN for quadratic convergence afterwards, which is similar to the practical use of the Newton method.} In Section \ref{sec5}, we  propose a novel communication-efficient DAN with low rank approximation.

\section{The Communication-efficient DAN-LA}\label{sec5}

In this section, we propose a novel communication-efficient version of DAN  in Algorithm \ref{DANLA} that reduces the transmitted Hessian to a vector of size $\cO(p)$. This reduction is significant in applications with high dimensional decision vectors. Since it maintains a global superlinear convergence rate, it is much faster than the existing distributed Newton-type algorithms \citep{mokhtari2017network,mansoori2019fast,mansoori2017superlinearly,tutunov2019distributed,mokhtari2016decentralized,eisen2018primal,varagnolo2016newton}.

To this end, we introduce two novel ideas  in Algorithm \ref{DANLA}: (a) a symmetric rank-$1$ matrix in $\bR^{p\times p}$ can be represented by the outer product of a vector in $\bR^{p}$ and itself with only a possible sign change; and (b) instead of directly compressing the Hessian, we approximate its {\em innovation} by a symmetric rank-1 matrix. If the decision vectors do not vary much between two consecutive iterations,  the innovation is expected to be ``small'', which suggests that  a rank-$1$ matrix approximation might not lose much information. 

Specifically, suppose that each node has the same estimate of the Hessian of $f$ in \eqref{obj} at the $(k-1)$-th iteration, say $\hat H_{k-1}^{(i)}$, and an estimate $H_{k-1}^{(i)}$ of the Hessian of $f_i$ such that $\hat H_{k-1}^{(i)}=\sum_{i=1}^{n}H_{k-1}^{(i)}$, which can be easily satisfied when $k=1$, e.g., let $\hat H_0^{(i)}=H_0^{(i)}=0,\forall i\in\cV$. At the next iteration, node $i$ computes a local Hessian $\nabla^2 f_i(\vx_{k}^{(i)})$ and approximates its innovation via a rank-1 matrix, i.e., 
$$
\nabla^2 f_i(\vx_{k}^{(i)})-H_{k-1}^{(i)} \approx s_{k}^{(i)}\cdot\vh_k^{(i)}(\vh_k^{(i)})^\T
$$
where $s_{k}^{(i)}$ and $\vh_k^{(i)}$ are solved by the Eckart-Young-Mirsky Theorem \citep[Theorem 2.23]{markovsky2012low}.  Particularly, let $\lambda_i$ be the $i$-th largest eigenvalue of $\nabla^2 f_i(\vx_{k}^{(i)})-H_{k-1}^{(i)}$ in magnitude and $\vw_i$ be the associated normalized right eigenvector, i.e., $(\nabla^2 f_i(\vx_{k}^{(i)})-H_{k-1}^{(i)} )\vw_i=\lambda_i \vw_i$ and $\|\vw_i\|=1$. Then, 
$\vh_k^{(i)}=\sqrt{|\lambda_{1}|}\vw_1,\ s_k^{(i)}=\text{sign}(\lambda_1),\ r_k^{(i)}=|\lambda_2|$ (Line \ref{line2:danla}). Clearly, this can be computed by the eigenvalue decomposition with complexity  $\cO(p^3)$ or a truncated singular value decomposition (SVD) with a complexity of $\cO(p^2)$ in some cases \citep{allen2016lazysvd}. Overall, the complexity is comparable to that of computing a Newton direction.

Moreover, node $i$ updates an estimate of the local Hessian as $H_{k}^{(i)}=H_{k-1}^{(i)}+s_{k}^{(i)}\cdot\vh_k^{(i)}(\vh_k^{(i)})^\T$(Line \ref{line3:danla}).  Via only communicating $s_{k}^{(i)}$ and $\vh_k^{(i)}$  in the DSF (Line \ref{line4:danla}), it is able to compute an estimate of  the Hessian of $f$  by $$\hat H_{k}^{(i)}=\hat H_{k-1}^{(i)}+\sum_{i=u}^{n}s_{k}^{(u)}\cdot\vh_k^{(u)}(\vh_k^{(u)})^\T \text{(Line \ref{line5:danla})}.  $$
The Newton direction is then computed based on the estimates $\hat H_{k}^{(i)}$ and $\hat\vg_k^{(i)}$.

%It remains to show how to obtain $\vh_i$ and $s_i$. Consider the following optimization problem
%\bee\label{lrapprox}
%\minimize_{W_1\in\bR^{p\times p}}\|W-W_1\|,\quad\st\ \rk(W_1)= 1
%\ene
%which can be solved by the Eckart-Young-Mirsky Theorem \citep[Theorem 2.23]{markovsky2012low}. Particularly, an optimal solution and its value are 
%\bee\label{eq2_sec5}
%W_1^\star=\lambda_1\vw_1\vw_1^\T,\ e_1^\star=|\lambda_2|
%\ene
%where $\lambda_i$ is the $i$-th largest eigenvalue of $W$ in magnitude and $\vw_i$ is the associated normalized right eigenvector, i.e., $W\vw_i=\lambda_i \vw_i$ and $\|\vw_i\|=1$. Thus, \eqref{lrapprox}  can easily be computed by the eigenvalue decomposition, whose complexity is $\cO(p^3)$ and is identical to that of finding a Newton direction.  

Another challenge is how to handle the error in approximating the Hessian of $f$. Intuitively, if the error is large, a pure Newton may lead to divergence, and we adaptively turn to a higher rank approximation for a better estimate. Otherwise, we can safely run a Newton update. To achieve it, each node $i$ computes an upper bound on the estimation error $\hat r_k^{(i)}$ (Line \ref{line5:danla}). If the error is smaller than the explicit threshold $\underline{r}$ in \eqref{eq2_danla} (Line \ref{line6:danla}),  it updates via an adaptive Newton step \eqref{eq1_danla} with our novel stepsize (Line \ref{line7:danla}). \diff{Note that the invertibility of $\hat H_k^{(i)}$ in \eqref{eq1_danla} always holds, since $\|\hat H_k^{(i)}-\nabla^2 f(\vec{x}_k^{(i)})\|\leq \underline{r}\leq \mu/3$ and $\nabla^2 f(\vec{x}_k^{(i)})\succeq \mu I$ (c.f. Assumption \ref{assum1}(a)). }

%DAN-LA is summarized in Algorithm \ref{DANLA}. 

%To implement it, node $i$ at the $k$-th iteration first computes a rank-1 matrix $s_k^{(i)}\vh_k^{(i)}(\vh_k^{(i)})^\T$ to approximate the Hessian innovation  $\nabla^2 f_i(\vx_k^{(i)})-H_{k-1}^{(i)}$ with the approximation error $r_k^{(i)}$ by using the eigenvalue decomposition (Line \ref{line2:danla}, c.f. \eqref{eq2_sec5}), where $H_{k-1}^{(i)}$ is an estimate of $\nabla^2 f_i(\vx_{k-1}^{(i)})$. It then updates $H_{k}^{(i)}=H_{k-1}^{(i)}+s_k^{(i)}\vh_k^{(i)}(\vh_k^{(i)})^\T$ (Line \ref{line3:danla}). 
%
%
%Next,  $\vh_k^{(i)}$, $r_k^{(i)}$ and   $\vg_k^{(i)}=\nabla f_i(\vx_k^{(i)})$ are sent to neighbors for running DSF of Algorithm \ref{DSF}, after which each node has access to the same set $\cS=\{(r_k^{(i)},s_k^{(i)},\vg_k^{(i)},\vh_k^{(i)})|i\in\cV\}$ (Line \ref{line4:danla}) to estimate the  gradient $\hat \vg_k^{(i)}$ and the Hessian $\hat H_k^{(i)}$ of $f$, and compute an upper bound on the estimation error $\hat r_k^{(i)}$ (Line \ref{line5:danla}). If the error is smaller than the explicit threshold $\underline{r}$ in \eqref{eq2_danla},  it updates via an  adaptive Newton step \eqref{eq1_danla} with our novel stepsize (Line \ref{line7:danla}). \diff{Note that the invertibility of $\hat H_k^{(i)}$ in \eqref{eq1_danla} always holds, since $\|\hat H_k^{(i)}-\nabla^2 f(\vec{x}_k^{(i)})\|\leq \underline{r}\leq \mu/3$ and $\nabla^2 f(\vec{x}_k^{(i)})\succeq \mu I$ (c.f. Assumption \ref{assum1}(a)). }

\diff{More specifically, if $\alpha_k^{(i)}=0$ at the $k$-th iteration, then $\vx_{k+1}^{(i)}=\vx_{k}^{(i)}$ and we obtain from Lines \ref{line2:danla}-\ref{line3:danla} in Algorithm \ref{DANLA} that in the $(k+1)$-th iteration,  $\nabla^2 f_i(\vx_k^{(i)})-\left(H_{k-1}^{(i)}+s_k^{(i)}\vh_k^{(i)}(\vh_k^{(i)})^\T\right)\approx s_{k+1}^{(i)}\vh_{k+1}^{(i)}(\vh_{k+1}^{(i)})^\T$. Since $\vh_k^{(i)}$ and $\vh_{k+1}^{(i)}$ are two linearly independent vectors, it implies that $\nabla^2 f_i(\vx_k^{(i)})-H_{k-1}^{(i)}$ is in fact approximated by a rank-2 matrix $s_k^{(i)}\vh_k^{(i)}(\vh_k^{(i)})^\T+s_{k+1}^{(i)}\vh_{k+1}^{(i)}(\vh_{k+1}^{(i)})^\T$, and the approximation error is strictly reduced, i.e., $r_{k+1}^{(i)}<r_{k}^{(i)}$. Jointly with Lines \ref{line4:danla}-\ref{line5:danla}, there must exist a smallest nonnegative integer $m\le p-1$ such that $\hat{r}_{k+m}^{(i)}\le \underline{r}$ which implies that  $\alpha_{k+m}^{(i)}>0$, and eventually $\vx_{k}^{(i)}$ is updated to a new vector. One can easily see that the above is equivalent to the direct use of rank-$(m+1)$ approximation in \eqref{eq1_alg3}. }

\begin{algorithm*}[!t]
	\makeatletter
	\renewcommand\footnoterule{%
		\kern-3\p@
		\hrule\@width.4\columnwidth
		\kern2.6\p@}
	\makeatother
	\caption{The DAN-LA --- from the view of node $i$}\label{DANLA}
	\begin{minipage}{\linewidth}
		\begin{algorithmic}[1]
			\REQUIRE Starting point $\vx_{0}^{(i)}=\vx_0$, $\vg_0^{(i)}=\nabla f_i(\vx_{0}),\ \hat H_{-1}^{(i)}=H_{-1}^{(i)}=0$, $c>0$, and $\mu, L, M$ in Assumptions \ref{assum1} and \ref{assum2}.
			\FOR {$k=0,1,2,\cdots$}
			\STATE\label{line2:danla} Let $\lambda_j$ be the $j$-th largest eigenvalue in magnitude of $\nabla^2 f_i(\vx_k^{(i)})-H_{k-1}^{(i)}$, $\vw_j$ be the associated unit eigenvector and
		        \bee\label{eq1_alg3}
			\vh_k^{(i)}=\sqrt{|\lambda_{1}|}\vw_1,\ s_k^{(i)}=\text{sign}(\lambda_1),\ r_k^{(i)}=|\lambda_2|.
			\ene
			\vskip 0pt
			\STATE\label{line3:danla} Compute $H_{k}^{(i)}=H_{k-1}^{(i)}+s_k^{(i)}\vh_k^{(i)}(\vh_k^{(i)})^\T$ and $\vg_k^{(i)}=\nabla f_i(\vx_k^{(i)})$. \COMMENT{$H_k^{(i)}$ is the rank-1 approximation of $\nabla^2 f_i(\vx_k^{(i)})$}
			\vskip 2pt
			\STATE\label{line4:danla} Run Algorithm \ref{DSF} to obtain $\cS=\text{DSF}(r_k^{(i)},s_k^{(i)},\vg_k^{(i)},\vh_k^{(i)})$,	where $\cS=\{(r_k^{(u)},s_k^{(u)},\vg_k^{(u)},\vh_k^{(u)})|u\in\cV\}$.\\ \COMMENT{Finite-time set-consensus}
			% \tag*{$\color{BLUE}\triangleright$ Finite-time set-consensus (Algorithm \ref{DSF})}
			\vskip 2pt
			\STATE\label{line5:danla} Use $\cS$ to compute $\hat\vg_k^{(i)}=\sum_{u=1}^{n}\vg_k^{(u)}$, $\hat H_{k}^{(i)}=\hat H_{k-1}^{(i)}+\sum_{u=1}^{n}s_k^{(u)}\vh_k^{(u)}(\vh_k^{(u)})^\T$, and $\hat r_k^{(i)}=\sum_{u=1}^{n}r_k^{(u)}$.\\ \COMMENT{$\hat H_k^{(i)}$ is an approximation of $\nabla^2 f(\vx_k^{(i)})$ and $\hat r_i^{(i)}$ bounds the approximation error}
			\vskip 2pt
			\STATE\label{line6:danla} Let $M_c=M+c$. Set the stepsize \COMMENT{Update only if the approximation error is small}
			\bee\label{eq2_danla}
			\alpha_{k}^{(i)}=\begin{cases}
				\min\Big\{1,\frac{\phi}{\|\hat\vg_{k}^{(i)}\|}\Big\}\text{, where }\phi\triangleq\frac{2\mu(\mu-\underline{r})^2}{L(M+\mu)}-\frac{2\underline{r}(\mu-\underline{r})}{L}>0, & \text{if $\hat r_k^{(i)}\leq\underline{r}\triangleq\frac{1}{3}\sqrt{M_c^2+3\mu^2}-\frac{M_c}{3}$} \\
				0,                                                                                                                                                                           & \text{otherwise.}
			\end{cases}
			\ene
			\STATE\label{line7:danla} \diff{If $\alpha_{k}^{(i)}=0$, set $\vx_{k+1}^{(i)}=\vx_{k}^{(i)}$. Otherwise}, update \COMMENT{Adaptive Newton step}
			\bee\label{eq1_danla}
			\vx_{k+1}^{(i)}=\vx_{k}^{(i)}-\alpha_k^{(i)} (\hat H_{k}^{(i)})^{-1}\hat\vg_{k}^{(i)}.
			\ene
			\ENDFOR
		\end{algorithmic}
	\end{minipage}
\end{algorithm*}

We prove the convergence of DAN-LA under the standard assumption in quasi-Newton methods, see e.g. \cite{nocedal2006numerical}.
\begin{assum}\label{assum2}
	The Hessian of $f$ in \eqref{obj} is upper bounded by $M$, i.e., $\nabla^2 f(\vx)\preceq MI,\forall \vx\in \mathbb{R}^p$.
\end{assum}

%It is a standard assumption in quasi-Newton methods, see e.g. \cite{nocedal2006numerical} and is equivalent to the Lipschitz smoothness assumption for twice continuously differentiable functions. 
%DAN-LA can also be considered as a distributed quasi-Newton method due to the use of inexact Hessian. We shall compare it with the existing quasi-Newton methods in Remark \ref{remark_quasi}. 

% Many previous second-order distributed algorithms also made this assumption \citep{mokhtari2016decentralized,eisen2018primal}.

\begin{theo}\label{theo3}
	Suppose Assumptions \ref{assum1}-\ref{assum2} hold, and let $\{\vx_k^{(i)}\}$ be generated by DAN-LA. For any $c> 0$ in Line \ref{line6:danla} of Algorithm \ref{DANLA} and any $i\in\cV$,  $\|\vx_k^{(i)}-\vx^\star\|$ converges superlinearly to 0, where $\vx^\star$ is the optimal point of \eqref{obj}.
\end{theo}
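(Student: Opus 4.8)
The plan is to reduce the distributed dynamics to a single centralized iteration and then analyze it as an inexact damped Newton method whose Hessian error provably vanishes. As in the proof of Theorem~\ref{theo2}, an induction on $k$ shows that all local iterates coincide, so I write $\bar\vx_k$ for the common value; because the DSF delivers exact set-consensus, the aggregated gradient $\hat\vg_k^{(i)}=\nabla f(\bar\vx_k)$ is \emph{exact}, while the aggregated Hessian obeys $\|\hat H_k^{(i)}-\nabla^2 f(\bar\vx_k)\|\le\hat r_k$ with $\hat r_k=\sum_u r_k^{(u)}$. On every active step ($\hat r_k\le\underline r$), Assumption~\ref{assum1}(a) and $\underline r\le\mu/3$ give $(\mu-\underline r)I\preceq\hat H_k\preceq(M+\underline r)I$, so $\vd_k=(\hat H_k)^{-1}\nabla f(\bar\vx_k)$ is well defined with $\|\vd_k\|\le\|\nabla f(\bar\vx_k)\|/(\mu-\underline r)$. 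The paragraph preceding Algorithm~\ref{DANLA} already shows that at most $p-1$ consecutive stalls can occur before the residual rank is exhausted, so active steps recur with bounded gaps and it suffices to track the active subsequence.

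Next I would establish the gradient recursion. Writing $g_k=\|\nabla f(\bar\vx_k)\|$ and using $\hat H_k\vd_k=\nabla f(\bar\vx_k)$ together with the $L$-Lipschitz Hessian in integral form, a Taylor expansion of $\nabla f$ along the step gives
\begin{equation}
g_{k+1}\le(1-\alpha_k)g_k+\alpha_k\hat r_k\|\vd_k\|+\tfrac{L}{2}\alpha_k^2\|\vd_k\|^2.
\end{equation}
In the damped phase ($g_k>\phi$, $\alpha_k=\phi/g_k$), substituting $\|\vd_k\|\le g_k/(\mu-\underline r)$ and $\hat r_k\le\underline r$ and then inserting the definition of $\phi$ collapses the right-hand side to $g_k-\phi\,M/(M+\mu)$, a fixed decrease; hence only finitely many active damped steps occur and the iterates enter the region $g_k\le\phi$, where $\alpha_k=1$ thereafter. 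In this pure phase the recursion reduces to $g_{k+1}\le\big(\tfrac{\hat r_k}{\mu-\underline r}+\tfrac{Lg_k}{2(\mu-\underline r)^2}\big)g_k$; since $\hat r_k\le\underline r\le\mu/3$ makes the leading factor at most $\tfrac12$ for small $g_k$, this already yields $g_k\to0$, and by strong convexity $\|\bar\vx_k-\vx^\star\|\le g_k/\mu\to0$.

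The crux, and the step I expect to be the main obstacle, is upgrading this linear decay to a superlinear one by proving $\hat r_k\to0$: the threshold only forces $\hat r_k\le\underline r$, which alone gives nothing better than linear convergence, so the vanishing must be extracted from the low-rank updating itself. The key tool is a telescoped Weyl inequality: writing the innovation as $\Delta_k^{(u)}=\big(\nabla^2 f_u(\bar\vx_k)-\nabla^2 f_u(\bar\vx_{k-1})\big)+R_{k-1}^{(u)}$, where the residual $R_{k-1}^{(u)}$ left after the previous peeling has magnitude-sorted eigenvalues $|\lambda_2(\Delta_{k-1}^{(u)})|\ge|\lambda_3(\Delta_{k-1}^{(u)})|\ge\cdots$, Weyl's inequality for singular values yields $r_k^{(u)}=|\lambda_2(\Delta_k^{(u)})|\le|\lambda_3(\Delta_{k-1}^{(u)})|+L\|\bar\vx_k-\bar\vx_{k-1}\|$. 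Iterating this $p-1$ times advances the eigenvalue index until it exceeds $p$ (a $p\times p$ matrix has no $(p{+}1)$-st eigenvalue), leaving
\begin{equation}
\hat r_k\le L'\!\!\sum_{j=k-p+2}^{k}\!\!\|\bar\vx_j-\bar\vx_{j-1}\|\le\frac{L'}{\mu-\underline r}\!\!\sum_{j=k-p+1}^{k-1}\!\!g_j,
\end{equation}
a window of $p-1$ recent step lengths, with stalls contributing zero movement so that they only help. Because $g_j\to0$, this window sum tends to $0$, whence $\hat r_k\to0$ and $g_{k+1}/g_k\le\tfrac{\hat r_k}{\mu-\underline r}+\tfrac{Lg_k}{2(\mu-\underline r)^2}\to0$. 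Thus $\{g_k\}$ converges $Q$-superlinearly, and $\|\bar\vx_k-\vx^\star\|\le g_k/\mu$ converges $R$-superlinearly, which is the claim. The real work lies in making this Weyl telescoping rigorous across interleaved active and stall iterations and in pinning down the constant $L'$ from the per-node Hessian-Lipschitz constants.
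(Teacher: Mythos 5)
Your proposal is correct, and through the consensus reduction, the gradient recursion, the fixed damped-phase decrease, and the at-most-$p$ gap between active steps it tracks the paper's Steps 1--2 (your direct substitution $\hat r_k\|\vz_k\|\le \underline{r}\,\alpha_k\|\vg_k\|/(\mu-\underline{r})$ even yields the damped-phase decrease $\phi M/(M+\mu)$ more cleanly than the paper's detour through a lower bound on $\|\vz_k\|$). Where you genuinely diverge is in the two concluding steps. The paper proves $r_k^{(i)}\to 0$ by a nuclear-norm contraction: with $E_k=\nabla^2 f_i(\vx_k^{(i)})-H_{k-1}^{(i)}$ and $\varepsilon_k=\|E_k\|_\ast$, peeling the top eigenpair removes at least a $1/p$ fraction, giving $\varepsilon_{k_{l+1}}\le(1-\tfrac1p)\varepsilon_{k_l}+\tfrac{pL}{\mu-\underline{r}}\|\vg_{k_l}\|$, and it then invokes the Dennis--Mor\'e characterization (Lemma \ref{lemma_dennis}) with $B_k=\hat H_k\to\nabla^2 f(\vx^\star)$ and $\alpha_k\to1$. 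You instead telescope Weyl's singular-value inequality $\sigma_j(\Delta_k)\le\sigma_{j+1}(\Delta_{k-1})+L_u\|\vx_k-\vx_{k-1}\|$ through the index shift created by rank-one peeling (the exact shift $\lambda_{k+1}^{(j)}=\lambda_k^{(j+1)}$ during stalls is verified inside the paper's Step 2), obtaining the sliding-window bound $\hat r_k\le \tfrac{L'}{\mu-\underline{r}}\sum_{j=k-p+1}^{k-1}\|\vg_j\|$, and you close the argument directly from the ratio $\|\vg_{k+1}\|/\|\vg_k\|\le\tfrac{\hat r_k}{\mu-\underline{r}}+\tfrac{L\|\vg_k\|}{2(\mu-\underline{r})^2}\to0$, bypassing Dennis--Mor\'e entirely. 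Both routes are sound; yours is more elementary and more quantitative, since tying the Hessian error to the last $p-1$ gradient norms in principle extracts an explicit superlinear rate (roughly $p$-step quadratic behavior), whereas the paper's contraction establishes vanishing error without such coupling and outsources the final implication to an external lemma. Two points to tighten: first, your ``leading factor at most $\tfrac12$ for small $g_k$'' should be pinned by the identity $\tfrac{\underline{r}}{\mu-\underline{r}}+\tfrac{L\phi}{2(\mu-\underline{r})^2}=\tfrac{\mu}{M+\mu}\le\tfrac12$, valid exactly once $\|\vg_k\|\le\phi$ (using $M\ge\mu$), rather than by an unquantified smallness claim; second, your constant $L'$ requires each local $f_u$ to have a Lipschitz Hessian, which is strictly stronger than Assumption \ref{assum1}(b) on $f$ alone --- though in fairness the paper's own Step 3 silently makes the same per-node assumption when it bounds $\|\nabla^2 f_i(\vx_{k_l+1}^{(i)})-\nabla^2 f_i(\vx_{k_l}^{(i)})\|$ by $L$ times the step length, so this is a shared gap rather than a defect of your route.
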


Similar to quasi-Newton methods \citep{nocedal2006numerical}, it is very difficult to explicitly quantify its convergence rate. Some remarks are provided after the proof.

\begin{proof}[Proof of Theorem \ref{theo3}]
	We first prove the convergence of $\{\vx_k^{(i)}\}$ and $\{r_k^{(i)}\}$, and then invoke Dennis-Mor\'{e} Theorem \citep{dennis1974characterization} to prove global superlinear convergence.

	{\bf Step 1:  A key inequality.}
	It can readily be obtained by mathematical induction that $\vx_k^{(1)}=\cdots=\vx_k^{(n)}\triangleq\vx_k$, $\hat\vg_k^{(1)}=\cdots=\hat\vg_k^{(n)}=\vg_k=\nabla f(\vx_k)$, $\alpha_k^{(1)}=\cdots=\alpha_k^{(n)}\triangleq\alpha_k$ and $\hat H_k^{(1)}=\cdots=\hat H_k^{(n)}\triangleq\hat H_k$. That is, all agents actually perform identical updates with aid of the set-consensus algorithm, and  \eqref{eq1_danla} becomes
	\bee\label{eq1_pfdanla}
	\vx_{k+1}=\vx_{k}-\alpha_k (\hat H_{k})^{-1}\vg_{k}.
	\ene
	Moreover, it implies that  $\hat H_k = \sum_{i=1}^{n}H_k^{(i)}$ (c.f. Lines \ref{line3:danla} and \ref{line5:danla}). By the Eckart-Young-Mirsky Theorem \citep[Theorem 2.23]{markovsky2012low}, we have $\|\nabla^2 f_i(\vx_k)-H_k^{(i)}\|= r_k^{(i)}$ and
	  	\bea\label{eq2_pfdanla}
	&\|\nabla^2 f(\vx_k)-\hat H_k\|=\Big\|\sum_{i=1}^{n}\nabla^2 f_i(\vx_k)-H_k^{(i)}\Big\|\\
	&\leq\sum_{i=1}^{n}\|\nabla^2 f_i(\vx_k)-H_k^{(i)}\|=\hat r_k^{(i)}.
	\ena
	Recall that $\vg_k=\nabla f(\vx_k)$, and let
	\bee\label{eq4_pfdanla}
	\vz_k=\vx_{k+1}-\vx_k=-\alpha_k (\hat H_{k})^{-1}\vg_{k}.
	\ene
	Then, we have
	\bea\label{eq1_proof}
	\vg_{k+1}&=\vg_k+\int_{0}^{1}\nabla^2 f(\vx_k+t\vz_k)\vz_kdt\\
	&=\vg_k+\int_{0}^{1}\big(\nabla^2 f(\vx_k+t\vz_k)-\hat H_k\big)\vz_kdt+\hat H_k\vz_k\\
	&=(1-\alpha_k)\vg_k+\int_{0}^{1}\Big(\nabla^2 f(\vx_k)-\hat H_k\Big)\vz_kdt\\
	&\quad+\int_{0}^{1}\Big(\nabla^2 f(\vx_k+t\vz_k)-\nabla^2 f(\vx_k)\Big)\vz_kdt
	\ena
	We now show that $\alpha_k\in(0,1]$ by proving that $\phi$ in \eqref{eq2_danla} is positive. Let $\tilde\phi(\underline{r})\triangleq\frac{L}{2(\mu-\underline{r})^2}\phi=\frac{\mu}{M+\mu}-\frac{\underline{r}}{\mu-\underline{r}}$. Then, $\tilde\phi(\underline{r})$ is strictly decreasing on $(0,\mu)$.  Since $\underline{r}=\frac{1}{3}\sqrt{M_c^2+3\mu^2}-\frac{M_c}{3}={\mu^2}/{(\sqrt{M_c^2+3\mu^2}+M_c)}<{\mu^2}/{(M+2\mu)}<\mu$, then $\tilde\phi(\underline{r})>\tilde\phi(\frac{\mu^2}{M+2\mu})=0$, i.e., $\phi>0$.

	It follows from \eqref{eq1_proof} that
	\bea\label{eq2_proof}
	\|\vg_{k+1}\|&\leq (1-\alpha_k)\|\vg_k\|+\|\nabla^2 f(\vx_k)-\hat H_k\|\|\vz_k\|\\
	&\quad+\int_{0}^{1}\Big\|\nabla^2 f(\vx_k+t\vz_k)-\nabla^2 f(\vx_k)\Big\|\|\vz_k\|dt\\
	&\leq (1-\alpha_k)\|\vg_k\|+\hat r_k^{(i)}\|\vz_k\|+\frac{L}{2}\|\vz_k\|^2
	\ena
	where we used \eqref{eq2_pfdanla} and Assumption \ref{assum1} in the last inequality.

	{\bf Step 2:  prove that $\lim_{k\rightarrow\infty}\{\alpha_k\}=1$ and $\{\vx_k\}$ converges.}
	Let $\cK=\{k_l:l\geq 0\}$ be an increasing sequence such that $k\in\cK$ if and only if $\alpha_{k}>0$ at iteration $k$, which follows from \eqref{eq2_danla} that $\hat r_{k}^{(i)}\leq\underline{r}$. 
	
	We show that $k_{l+1}-k_l\leq p$ and $\vx_{k_l+1}=\vx_{k_l+2}=\cdots=\vx_{k_{l+1}}$, where $p$ is the dimension of $\vx_k$.  For any $k_l<k< k_{l+1}$, it follows from \eqref{eq1_pfdanla} and $\alpha_k=0$ that $\vx_{k+1}=\vx_{k}$, and the second part follows by induction.  Suppose that there exists an $l$ such that $k_{l+1}-k_l>p$.  Let $\lambda_k^{(j)},j=1,\cdots,p$ be the $j$-th largest eigenvalue of $\nabla^2 f_i(\vx_k^{(i)})-H_{k-1}^{(i)}$ in magnitude and $\vw_k^{(j)}$ be the corresponding unit eigenvector. Define $\lambda_{k}^{(j)}=0$ for any $j>p$. Since $\nabla^2 f_i(\vx_k^{(i)})=\nabla^2 f_i(\vx_{k+1})$ for any $k_l<k<k_{l+1}$, it follows from \eqref{eq1_alg3} that
	\bea
	&\nabla^2 f_i(\vx_{k+1})-H_k^{(i)}\\
	&=\nabla^2 f_i(\vx_{k})-H_{k-1}^{(i)}-\lambda_{k}^{(1)}\vw_k^{(1)}(\vw_k^{(1)})^\T\\
	&=\sum_{j=2}^{p}\lambda_{k}^{(j)}\vw_k^{(j)}(\vw_k^{(j)})^\T,
	\ena
	and hence $\lambda_{k+1}^{(j)}=\lambda_{k}^{(j+1)}$ for all $j>0$, which further implies that $|\lambda_{k+1}^{(j)}|\leq|\lambda_{k}^{(j)}|$.  Then, for any $k\in[k_l+p,k_{l+1})\neq\emptyset$, we have $r_k^{(i)}=|\lambda_{k}^{(2)}|\leq|\lambda_{k_l+p}^{(2)}|=|\lambda_{k_l+1}^{(p+1)}|=0,\forall i$. Thus, $\hat r_k^{(i)}=\sum_{i=1}^{n}r_k^{(i)}=0$ and we must have $\alpha_k>0$, and hence $k\in\cK$, which leads to a contradiction. %Therefore, $k_{l+1}-k_l\leq p$.

	Next, we study the sequence $\{\|\vg_{k_l}\|:k_l\in\cK\}$. Consider the first stage that  $\alpha_{k_l}={\phi}/{\|\hat\vg_{k_l}^{(i)}\|}\leq1$. It follows from \eqref{eq2_pfdanla} and the definition of $\cK$ that
	\bee\label{eq3_pfdanla}
	\|\nabla^2 f(\vx_{k_l})-\hat H_{k_l}\|\leq\hat r_{k_l}^{(i)}\leq\underline{r}=\frac{\underline{r}\alpha_{k_l}\|\hat\vg_{k_l}^{(i)}\|}{\phi}
	\ene
	which jointly with Assumption \ref{assum2} implies that $\|\hat{H}_{k_l}\|\leq \|\nabla^2 f(\vx_{k_l})\|+\underline{r}\leq M+\underline{r}$. Thus, it follows from \eqref{eq4_pfdanla} that
 $
	\|\vz_{k_l}\|=\alpha_{k_l}\|(\hat{H}_{k_l})^{-1}\vg_{k_l}\|=\alpha_{k_l}\sqrt{(\vg_{k_l})^\T(\hat{H}_{k_l})^{-2}\vg_{k_l}}
	\geq {\alpha_{k_l}\|\vg_{k_l}\|}/{(M+\underline{r})}
$
	which, together with \eqref{eq3_pfdanla}, implies that
	\bee\label{eq5_pfdanla}
	\hat r_{k_l}^{(i)}\leq\frac{\underline{r}\alpha_{k_l}\|\hat\vg_{k_l}^{(i)}\|}{\phi}\leq\frac{\underline{r}(M+\underline{r})\|\vz_{k_l}\|}{\phi}.
	\ene
	Combining \eqref{eq5_pfdanla} and \eqref{eq2_proof} yields that
	\bea\label{eq7_pfdanla}
	&\|\vg_{k_{l+1}}\|=\|\vg_{{k_l}+1}\|\\
	&\leq(1-\alpha_{k_l})\|\vg_{k_l}\|+\Big(\frac{L}{2}+\frac{\underline{r}(M+\underline{r})}{\phi}\Big)\|\vz_{k_l}\|^2\\
	&\leq\|\vg_{k_l}\|-\alpha_{k_l}\|\vg_{k_l}\|+\frac{L\phi+2\underline{r}(M+\underline{r})}{2\phi(\mu-\underline{r})^2}(\alpha_{k_l}\|\vg_{k_l}\|)^2\\
	&=\|\vg_{k_l}\|-\phi+\frac{L\phi+2\underline{r}(M+\underline{r})}{2(\mu-\underline{r})^2}\phi
	\ena
	where the first equality follows from $\vx_{k_l+1}=\vx_{k_l+2}=\cdots=\vx_{k_{l+1}}$, the second inequality follows from \eqref{eq4_pfdanla} and \eqref{eq3_pfdanla}, and the last equality follows from the definition of the first stage.

Let $\theta={\mu}/{(M+\mu)}$. The definition of $\phi$ implies that
	\bee\label{eq10_pfdanla}
	\frac{L\phi}{2(\mu-\underline{r})^2}=\theta-\frac{\underline{r}}{\mu-\underline{r}}.
	\ene
	Let $	q(r)\triangleq-\frac{r}{\mu-r}+\frac{r(M+r)}{(\mu-r)^2}=\frac{2r^2+(M-\mu)r}{(\mu-r)^2}$ which is strictly increasing on $[0,\mu)$.  Notice that $q(0)=0$ and $q(r_0)={1}/{2}$ with $r_0={1}/{3}\cdot(\sqrt{M^2+3\mu^2}-{M})$. Since $0<\underline{r}<r_0$, we have $q(\underline{r})<q(r_0)={1}/{2}$. Since $M\geq\mu$, it jointly with  \eqref{eq10_pfdanla} implies that
	\bee\label{eq6_pfdanla}
	\frac{L\phi+2\underline{r}(M+\underline{r})}{2(\mu-\underline{r})^2}<\frac{\mu}{M+\mu}+\frac{1}{2}\leq1.
	\ene
	
By \eqref{eq7_pfdanla}, we obtain that
	\bee\label{eq9_pfdanla}
	\|\vg_{k_{l+1}}\|\leq\|\vg_{k_l}\|-\Big(1-\frac{L\phi+2\underline{r}(M+\underline{r})}{2(\mu-\underline{r})^2}\Big)\phi.
	\ene
	Thus, the sequence $\{\|\vg_{k_l}\|\}$ is monotonically decreasing by at least a  constant at each step in the first stage. \eqref{eq9_pfdanla} implies that there exists a $k_l$ such that the algorithm enters the second stage where $\alpha_{k_l}=1$, i.e., $\|\vg_{k_l}\|\leq\phi$. Similar to \eqref{eq7_pfdanla}, it follows from \eqref{eq2_proof} that
	\bea\label{eq11_pfdanla}
	\|\vg_{k_{l+1}}\|&=\|\vg_{{k_l}+1}\|\leq\hat r_k^{(i)}\|\vz_{k_l}\|+\frac{L}{2}\|\vz_{k_l}\|^2\\
	&\leq\frac{\underline{r}}{\mu-\underline{r}}\|\vg_{k_l}\|+\frac{L}{2(\mu-\underline{r})^2}\|\vg_{k_l}\|^2.
	%	&\leq\Big(\frac{\underline{r}}{\mu-\underline{r}}+\frac{L}{2(\mu-\underline{r})^2}\Big)\|\vg_{k_l}\|
	\ena
	Let $e_{k_l}=\|\vg_{k_l}\|/\phi\leq 1$. Then, it implies that
	\bea\label{eq8_pfdanla}
	e_{k_{l+1}}&\leq\frac{\underline{r}}{\mu-\underline{r}}e_{k_l}+\frac{L\phi}{2(\mu-\underline{r})^2}(e_{k_l})^2\\
	&\leq\Big(\frac{\underline{r}}{\mu-\underline{r}}+\frac{L\phi}{2(\mu-\underline{r})^2}\Big)e_{k_l}.
	\ena
	Recall from \eqref{eq10_pfdanla} that
	\bee\label{eq12_pfdanla}
	\frac{\underline{r}}{\mu-\underline{r}}+\frac{L\phi}{2(\mu-\underline{r})^2}=\theta\leq\frac{1}{2},
	\ene
	which, combined with \eqref{eq8_pfdanla}, shows that $\{e_{k_l}\}$ and $\{\|\vg_{k_l}\|\}$ are monotonically decreasing and converge to 0 at least linearly at the second stage. This also concludes that $\|\vg_{k}\|$ will remain in the second stage for all $k\geq k_l$, and hence $\lim_{k\ra\infty}\alpha^k=1$.

	Combining all above, it implies that $\{\|\vg_{k_l}\|:k_l\in\cK\}$ converges to 0. Since $k_{l+1}-k_l\leq p$ and $\vx_{k_l+1}=\cdots=\vx_{k_{l+1}}$, then $\|\vg_{k}\|$ converges to 0 as well.  Since $\|\vx_k-\vx^\star\|\leq\frac{1}{\mu}\|\vg_k\|$, we have thus proved that $\vx_{k}$ converges to $\vx^\star$.

	{\bf Step 3: prove the convergence of $\{r_k^{(i)}\}$ to 0.} We now show that the approximation error $\{r_k^{(i)}\}$ converges to 0 for all $i$. Denote by $\sigma_j(A)$ the $j$-th largest singular value of $A\in\bR^{p\times p}$. Let $E_k=\nabla^2 f_i(\vx_k^{(i)})-H_{k-1}^{(i)}$ and $\varepsilon_k=\|E_k\|_\ast$, where $\|A\|_\ast=\sum_{j=1}^{p}\sigma_j(A)\geq0$ is the nuclear norm of $A$. Since the algorithm will enter the second stage after a finite number of iterations, it is sufficient to focus only on this stage. Denote by $k_l\in\cK$ the first time step when the algorithm enters it. Then,
	\bea\label{eq13_pfdanla}
	&\varepsilon_{{k_{l+1}}} \\
	&\leq\varepsilon_{{k_l}+1}=\|\nabla^2 f_i(\vx_{k_l+1}^{(i)})-H_{k_l}^{(i)}\|_\ast\\
	&=\|\nabla^2 f_i(\vx_{k_l+1}^{(i)})-\nabla^2 f_i(\vx_{k_l}^{(i)})+\nabla^2 f_i(\vx_{k_l}^{(i)})-H_{k_l}^{(i)}\|_\ast\\
	&\leq\|\nabla^2 f_i(\vx_{k_l}^{(i)})-H_{k_l}^{(i)}\|_\ast+\|\nabla^2 f_i(\vx_{k_l+1}^{(i)})-\nabla^2 f_i(\vx_{k_l}^{(i)})\|_\ast\\
	&\leq\sum_{j=2}^{p}\sigma_j(E_{k_l})+p\|\nabla^2 f_i(\vx_{k_l+1}^{(i)})-\nabla^2 f_i(\vx_{k_l}^{(i)})\|\\
	&\leq\varepsilon_{{k_l}}-\sigma_1(E_{k_l})+\frac{pL}{\mu-\underline{r}}\|\vg_{k_l}\|\\
	&\leq (1-\frac{1}{p})\varepsilon_{{k_l}}+\frac{pL}{\mu-\underline{r}}\|\vg_{k_l}\|
	\ena
	where we used $\|A\|_\ast\leq p\|A\|$, Assumption \ref{assum1}, the rank-1 approximation property, and $0\leq \varepsilon_k\leq p\sigma_1(E_k)$.	Note that \eqref{eq11_pfdanla}-\eqref{eq12_pfdanla} show that $\|\vg_{k_l}\|,k_l\in\cK$ converges to 0 at least linearly. 
	% and hence $\lim_{t\ra\infty}\sum_{u=l}^{t}\eta^{t-u}\|\vg_{k_u}\|=0$ for any $\eta\in(0,1)$. 
	Then, it follows from \eqref{eq13_pfdanla} that $\lim_{k\ra\infty}\varepsilon_k=0$. The convergence of $\{r_k^{(i)}\}$ to 0 follows immediately by noticing $\varepsilon_k\geq r_k^{(i)}$.

	% Then, it follows from the supermartiginal convergence theorem \cite[Proposition A.4.4]{bertsekas2015convex} that $\{\varepsilon_{t}:t\in\cK\}$ converges (not necessarily to 0) and $\sum_{t\in\cK}\sigma_1(E_t)<\infty$, which combined with the property of $\cK$ implies that $\sum_{t=0}^\infty\sigma_1(E_t)<\infty$, and hence $\liminf_{k\ra\infty}\sigma_1(E_k)=0$. Note that $0\leq \varepsilon_k\leq p\sigma_1(E_k)$, which implies $\liminf_{k\ra\infty}\varepsilon_k=0$. This, together with the convergence of $\varepsilon_k$, yields $\lim_{k\ra\infty}\varepsilon_k=0$. The convergence of $\{r_k^{(i)}\}$ to 0 follows immediately by noticing $\varepsilon_k\geq r_k^{(i)}$.

	{\bf Step 4: prove the superlinear convergence.} Now we study the sequence $\hat H_k$. It follows from \eqref{eq2_pfdanla} and Assumption \ref{assum1} that
	\bea
	\|\hat H_k-\nabla^2 f(\vx^\star)\|&\leq\|\hat H_k-\nabla^2 f(\vx_k)\|+L\|\vx_k-\vx^\star\|\\
	&\leq\sum_{i=1}^{n}r_k^{(i)}+L\|\vx_k-\vx^\star\|.
	\ena
	Since we have already shown that both $\|\vx_k-\vx^\star\|$ and $\sum_{i=1}^{n}r_k^{(i)}$ converge to 0, then $\hat H_k$ converges to $\nabla^2 f(\vx^\star)$. In view of \eqref{eq1_pfdanla}, it  follows from Lemma \ref{lemma_dennis} that the superlinear convergences of $\|\vx_k-\vx^\star\|$ and $\|\nabla f(\vx_k)\|$.  \end{proof}

\begin{remark}[Relation with BFGS]\label{remark_quasi}
	\diff{Even though DAN-LA is also a  quasi-Newton method, it does not imply that we can combine DSF and other quasi-Newton methods (e.g. BFGS \citep{nesterov2018lectures,nocedal2006numerical}) to achieve the global superlinear convergence. As it usually requires using a line-search scheme, one of the main challenges lies in the design of stepsizes in the distributed setting.  A striking difference from BFGS is that the Hessian approximation error in DAN-LA can be explicitly controlled, which is essential to our stepsize co-design (Line \ref{line6:danla}).  
	
	In fact, BFGS has been originally adopted to avoid computing a Hessian, whereas the Hessian approximation in DAN-LA aims to reduce the communication cost and nodes still need to compute the Hessian.  }

	% Similar to quasi-Newton methods, DAN-LA uses inexact Hessians. However, it is different from the popular \diff{centralized} quasi-Newton methods such as BFGS \citep{nesterov2018lectures,nocedal2006numerical}, where the approximated Hessian is adopted to reduce computation cost. In contrast, the Hessian approximation in DAN-LA aims to reduce the communication cost per iteration, which is essential in distributed settings and is not a problem in centralized quasi-Newton methods. \diff{Note that quasi-Newton methods achieve superlinear convergence only when line searches are used to determine stepsizes, which is not amenable to distributed settings. It is unclear whether quasi-Newton methods can converge under Polyak's adaptive stepsize, which could be a future work.}
\end{remark}

\begin{remark}[The effect of parameter $c$]\label{remark5}
	The parameter $c$ in Line \ref{line6:danla} aims to balance the communication cost and the computational cost. Specifically, from \eqref{eq2_danla}, a larger $c$ means a better approximation of the  Hessian of $f$ and also suggests a larger stepsize. However, it may result in a larger number of un-updated iterations. How the parameter $c$ affects the global convergence rate is related to the network bandwidth, parameters in Assumption \ref{assum1}, etc. In fact, DAN-LA reduces to DAN if $c$ tends to infinity, and it is also feasible to set $c=0$ if $M$ is \emph{strictly} larger than $\mu$. We will empirically show its effect in Section \ref{sec6}.
\end{remark}

\begin{remark}[Comparison with DAN]
	Compared to DAN, DAN-LA reduces the transmitted messages' size from $\cO(p^2+p)$ to $\cO(2p+1)$ per iteration, which is essentially identical to existing first-order methods, e.g., \citet{nedic2017achieving} requires nodes to transmit messages with size $\cO(2p)$. Nevertheless, DAN-LA may require more iterations than DAN to achieve the same level of accuracy. It is difficult to conclude which one is always better from a theoretical point of view. In our experiments, we find that the number of total transmitted messages in bits of DAN-LA is much smaller than that of DAN to achieve the same level of accuracy, while the computation increases.
\end{remark}

\begin{remark}[Importance of set-consensus]
	\diff{Although the finite-time set-consensus step in DAN can be replaced by a finite-time \emph{average} consensus \citep[e.g.][]{charalambous2018laplacian,wang2018speeding,yuan2013decentralised} to reduce memory size, it is indispensable in DAN-LA.} In Line \ref{line5:danla} of Algorithm \ref{DANLA}, nodes need to compute the summation $\frac{1}{n}\sum_{i=1}^{n}s_k^{(i)}\vh_k^{(i)}(\vh_k^{(i)})^\T$, which requires transmitting messages in size $\cO(p^2)$ to their neighbors if iterative consensus methods are adopted. In contrast, the transmitted messages are of size $\cO(p)$ in the DSF.
\end{remark}

\begin{remark} \label{remark9}
	Similar to DAN, it is suggested to initially perform several iterations of first-order methods to achieve a good starting point for DAN-LA. In view of Line \ref{line5:danla}, the complexity of obtaining the inverse  in \eqref{eq1_danla} can be reduced from $\cO(p^3)$ to $\cO(np^2)$ if $n<p$ by invoking the Sherman-Morrison-Woodbury formula \cite[Section 0.7.4]{horn2012matrix}. 
	% Finally, the Hessian does not have to be reevaluated for skipped iterations.
\end{remark}

\section{Numerical Examples}\label{sec6}

In this section, we test DAN and DAN-LA by training a binary logistic regression classifier for the Covertype datatset from the UCI machine learning repository \citep{Dua2017UCI}, where the samples in Classes 3 and 7 are used. The optimization problem involved has the following form: 
\bee
\text{min.}\ l(\omega)\triangleq-\sum_{i=1}^{m}y_i\ln\sigma(z_i)+(1-y_i)\ln(1-\sigma(z_i))+\frac{\rho}{2}\|\omega\|^2
\ene
where $\omega\in\bR^{55}$ and $m=56264$ is the number of samples; $z_i=\omega^\T\vx_i$ where $\vx_i\in\bR^{55}$ is the feature of the $i$-th sample with each entry normalized to $[-1,1]$, and $y_i\in\{0,1\}$ is the corresponding label. The regularization parameter is chosen as $\rho=0.01m$. The gradient and Hessian are respectively $\nabla l(\omega)=\sum_{i=1}^m\vx_i(\sigma(z_i)-y_i)+r\omega$ and  $\nabla^2 l(\omega)=\sum_{i=1}^m\vx_i\vx_i^\T\sigma(z_i)(1-\sigma(z_i))+rI$.

For distributed training, we randomly partition the dataset over $n=10$ or $n=100$ nodes with each one privately holding a local subset. We compare our algorithms with the four second-order methods: FNRC \citep{varagnolo2016newton}, ESOM-3 \citep{mokhtari2016decentralized}, \diff{Newton tracking \citep{zhang2020newton},  D-BFGS \citep{eisen2017decentralized},} and a first-order method: DIGing \citep{nedic2017achieving}. An undirected communication network is constructed by  adopting the Erd\H os-R\' enyi model \citep{erdHos1960evolution}, i.e.,  each pair of nodes is connected with probability $2\ln n/n$. For comparison, the edge weights are generated by the Metropolis method \citep{nedic2017achieving,shi2015extra}. 
%Note that the network topology and weights dramatically affect the convergence rates of FNRC, ESOM-3, Newton tracking, D-BFGS, and DIGing, while having little effect on DAN and DAN-LA, but space limitation prevents us from doing thorough experiments on all network topologies.

We also implement the centralized gradient descent method as a baseline, where the training is conducted on a single node. In all algorithms, we set $\mu=0.02m, L=m$ and $M=0.04m$ in Assumptions \ref{assum1} and \ref{assum2} to guide the selection of stepsizes. For example, the stepsize in the centralized gradient method is set to the optimal one $2/(\mu+L)$ \citep{nesterov2018lectures} and the stepsize used in DIGing is $a/L$ with $a$ determined by a grid search. If the theoretical suggestions are not clear, we manually tune the stepsizes to obtain the numerically best one.
% The starting points of all algorithms are identical and are determined by running the centralized gradient descent method starting from 0 for 10 iterations.
The convergence behaviors of these algorithms are depicted in Figs. \ref{fig1} and \ref{fig2}. 

\begin{figure}[!t]
	\centering
	\subfloat[$n=10$]{\label{fig1a}\includegraphics[width=0.96\linewidth]{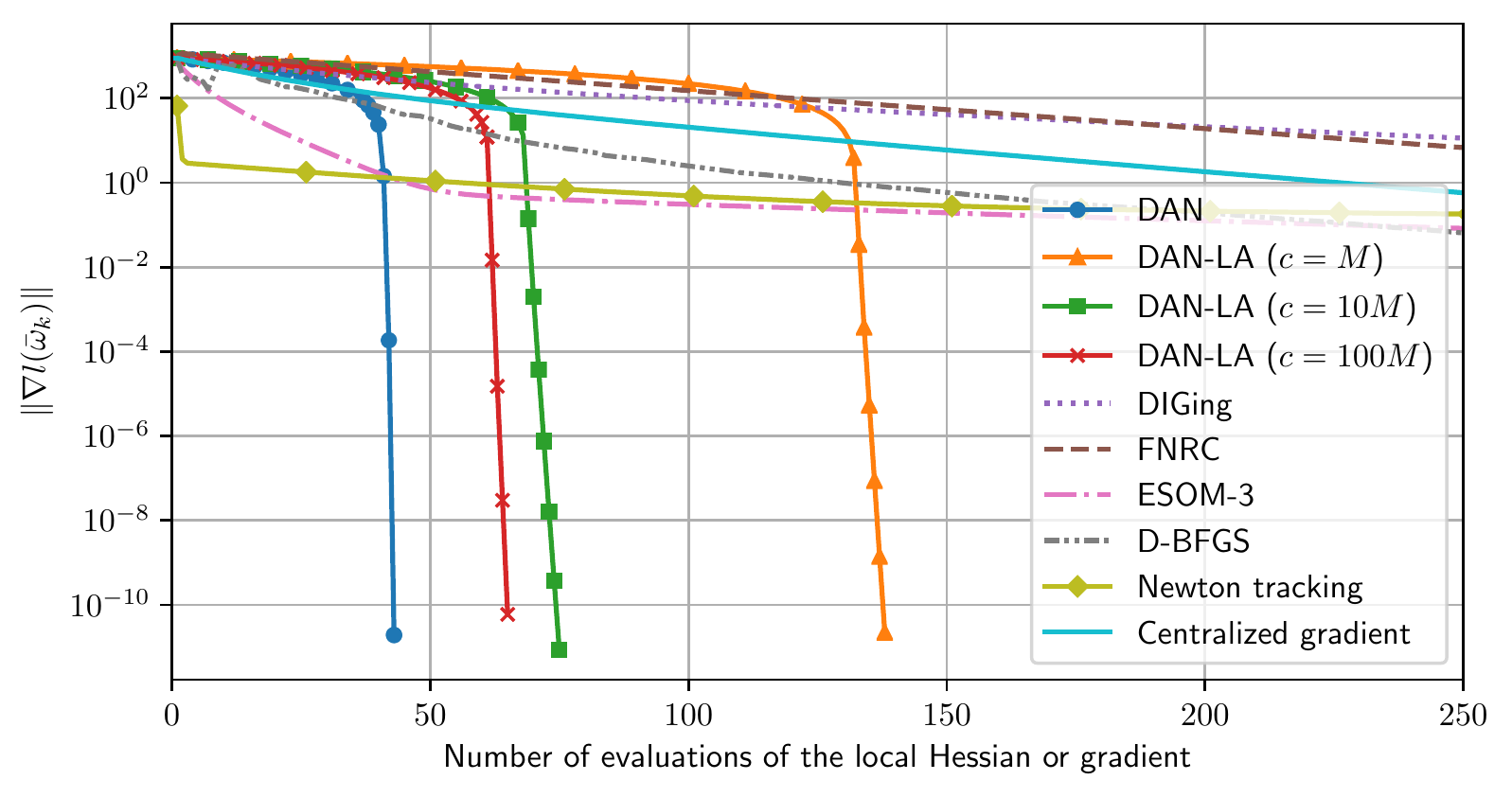}}\\
	\subfloat[$n=100$]{\label{fig1b}\includegraphics[width=0.96\linewidth]{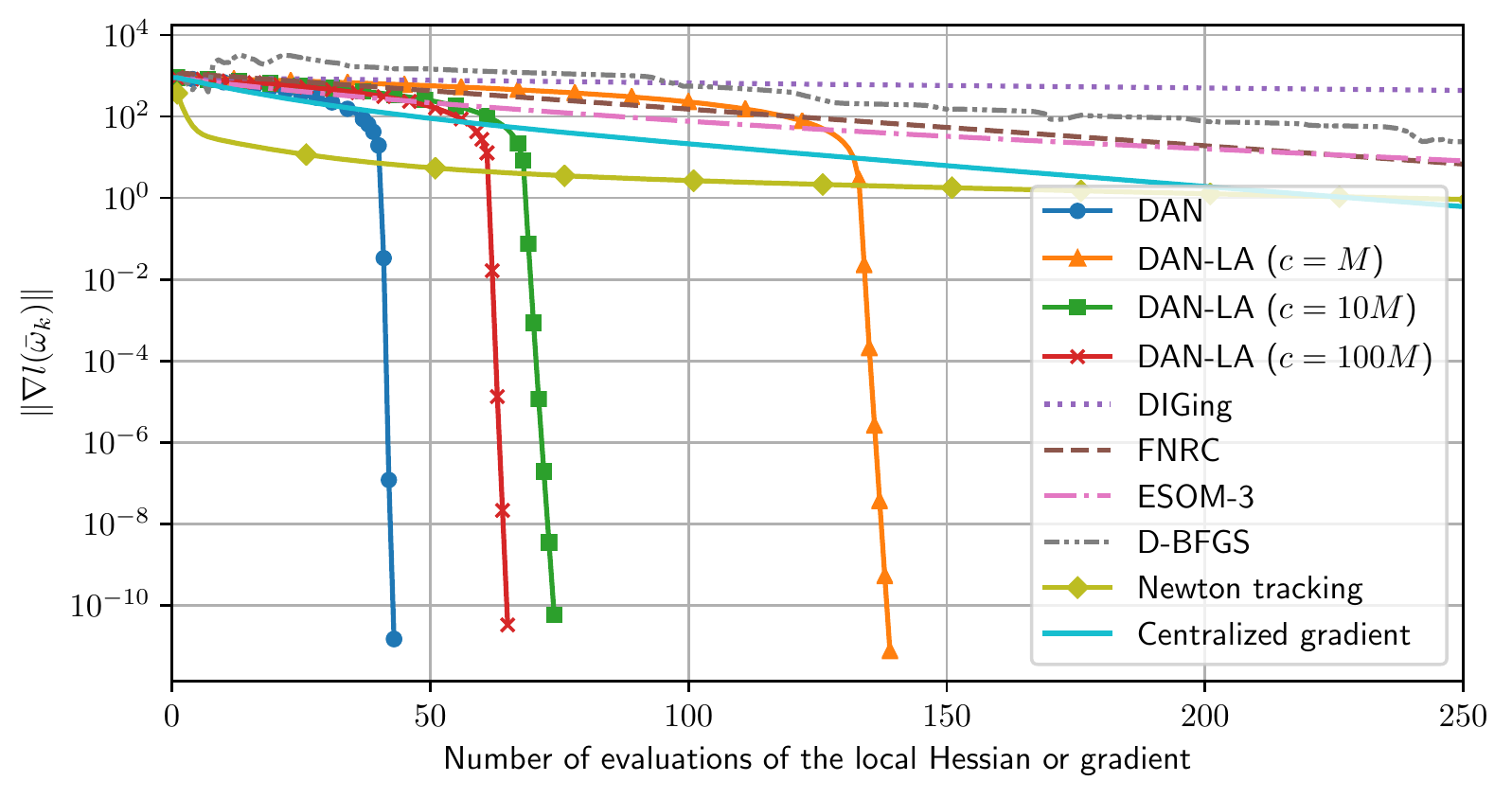}}
	\caption{The convergence rates w.r.t. the number of evaluations of local Hessian or gradient.}
	\label{fig1}
\end{figure}

\begin{figure}[!t]
	\centering
	\subfloat[$n=10$]{\label{fig2a}\includegraphics[width=0.96\linewidth]{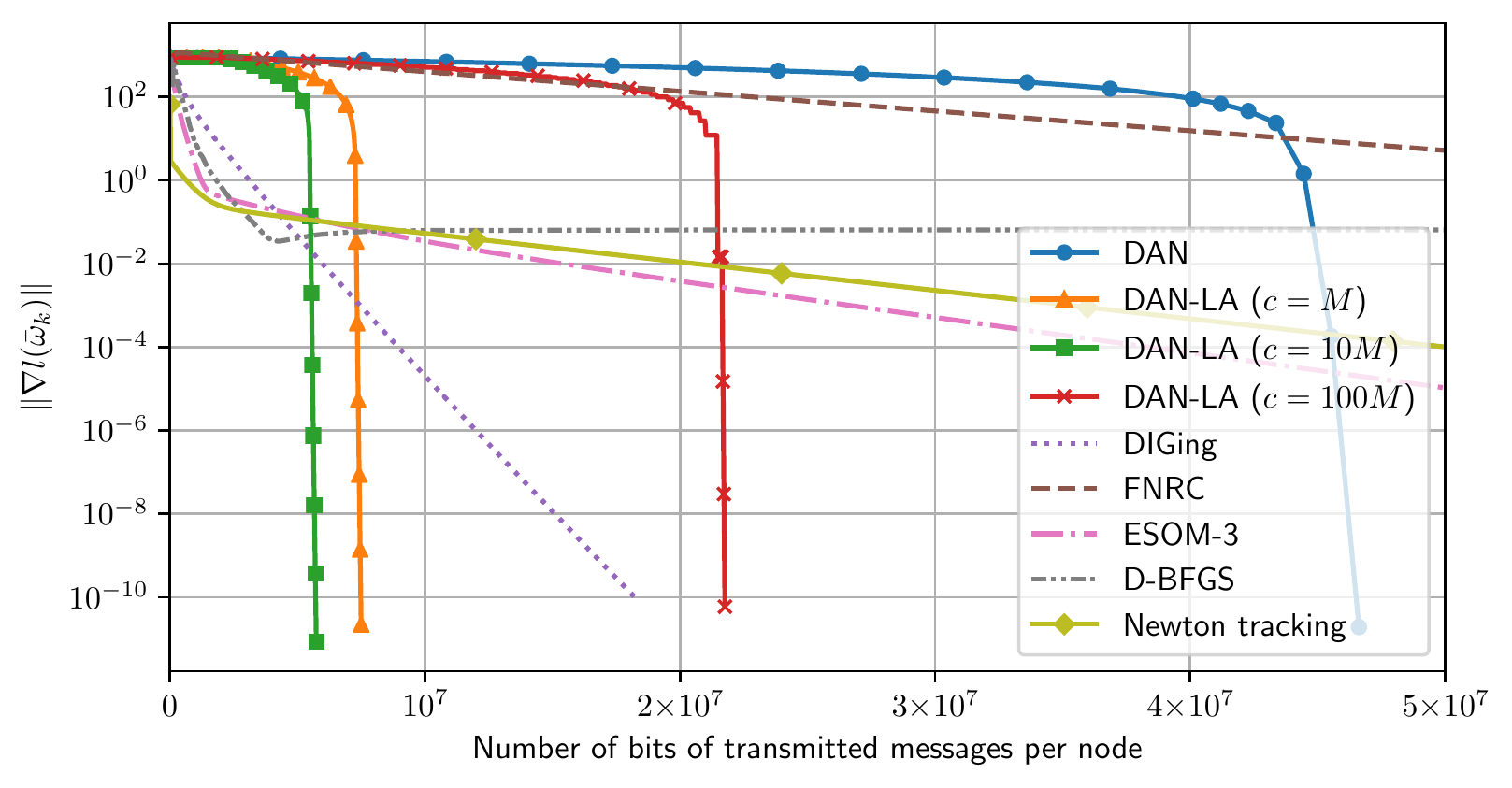}}\\
	\subfloat[$n=100$]{\label{fig2b}\includegraphics[width=0.96\linewidth]{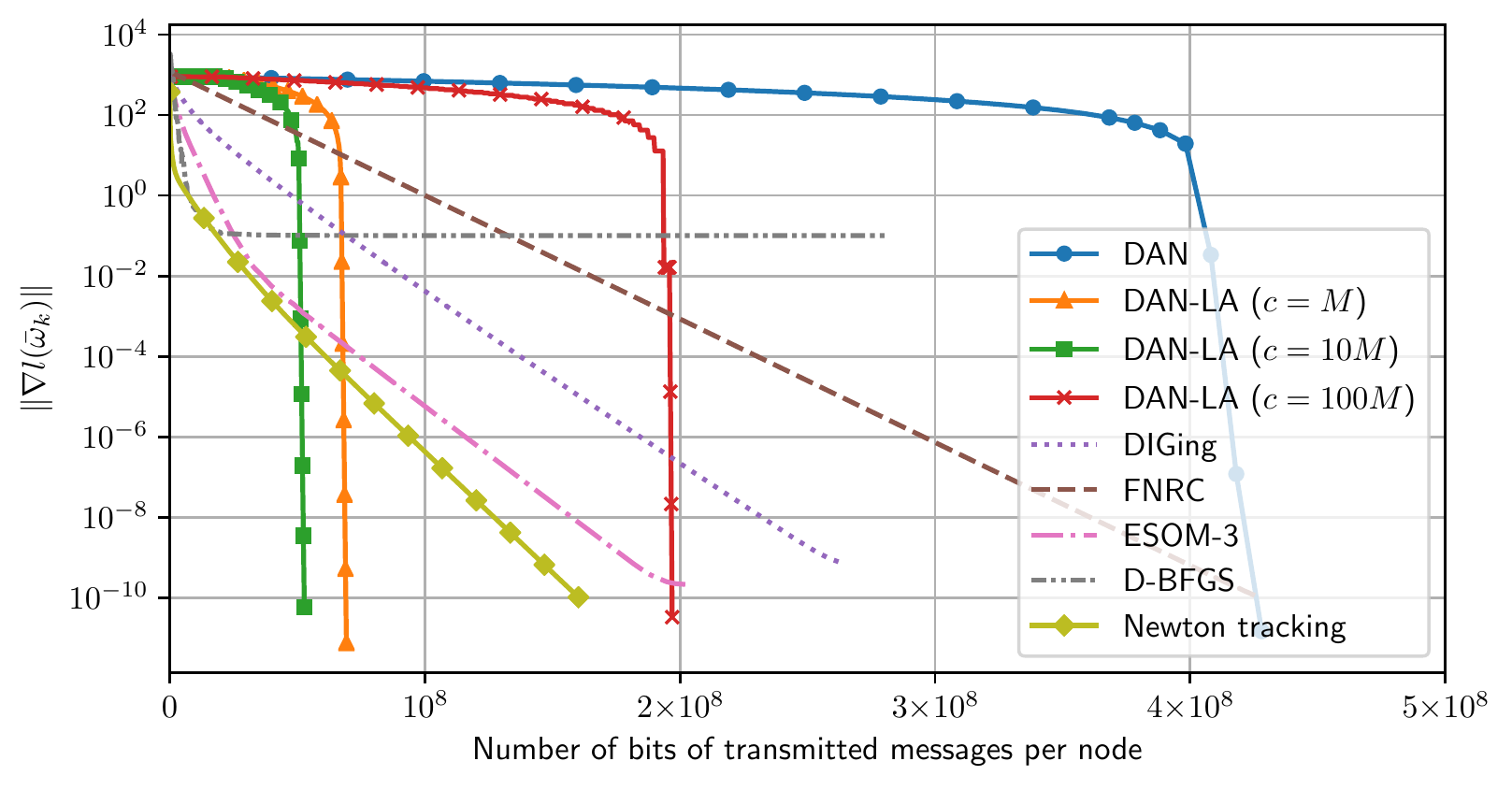}}
	\caption{The convergence rate w.r.t. the transmitted bits.}
	\label{fig2}
\end{figure}

Fig. \ref{fig1} plots the convergence rates for the cases of $n=10$ and $n=100$, respectively. The rate is measured by the decreasing speed of the global gradient's norm versus the number of evaluations of the local gradient or Hessian. It shows that both DAN and DAN-LA achieve superlinear convergence and outperform DIGing, FNRC, D-BFGS, Newton tracking, and ESOM-3. Though they progress slow  in the early stages, they rapidly outpace others in the pure Newton phase as expected. In practice, one can adopt some first-order methods initially and then switch to DAN or DAN-LA (c.f. Remark \ref{remark9}). Fig. \ref{fig1} validates Remark \ref{remark5} where a larger $c$ leads to a better approximation quality of the global Hessian and a larger stepsize. % Hence, the global convergence rate w.r.t computations for a larger $c$ is higher.

Fig. \ref{fig2} shows the convergence rates versus the averaged transmitted bits of a node (a number is stored and transmitted in 64-bit floating-point format). The DAN and DAN-LA behave similarly as in Fig. \ref{fig1}. Moreover, the transmission of Hessians results in large communication overhead of DAN before it enters the pure Newton phase. Thus, DAN seems more suitable for networks with high bandwidth in modest size, and DAN-LA with an appropriate $c$ is much more communication efficient.  
\section{Conclusion}\label{sec7}

This paper has proposed two distributed second-order optimization algorithms with global superlinear convergence. The striking features lie in the use of (a) a finite-time set-consensus method, (b)  an adaptive version of Newton method for global convergence, and (c) the low-rank matrix approximation methods to compress the Hessian for efficient communication.  Future works can focus on asynchronous versions of the proposed algorithms as in \citep{zhang2019asynchronous,zhang2019asyspa}, the integration of the proposed algorithms with quasi-Newton methods, and developing more communication-efficient ones.

% \begin{ack}                               % Place acknowledgements
% 	This work was supported by the National Natural Science Foundation of China under Grant 61722308, and in part by the ARL under cooperative agreement W911NF-17-2-0196. % here.
% \end{ack}

\appendix\label{apx}
\section{Technical Lemma}

\begin{lemma}[\citealp{dennis1974characterization}]\label{lemma_dennis}
		Suppose $f$ is twice continuously differentiable and $\nabla^2 f(\vx^\star)$ is nonsingular for some $\vx^\star$. Let $\{B_k\}$ be a sequence of nonsingular matrices, and for some $\vx_0$ let the sequence  $\{\vx_k\}$ converge to $\vx^\star$, where $\vx_{k+1}=\vx_k-\alpha_k(B_k)^{-1}\nabla f(\vx_k)$. Then, $\{\vx_k\}$ converges superlinearly to $\vx^\star$ and $\nabla f(\vx^\star)=0$ if and only if $\lim_{k\ra\infty}\alpha_k=1$ and
		\bee
		\lim_{k\ra\infty}\frac{\|[B_k-\nabla^2 f(\vx^\star)](\vx_{k+1}-\vx_{k})\|}{\|\vx_{k+1}-\vx_{k}\|}=0.
		\ene
	\end{lemma}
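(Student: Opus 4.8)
The plan is to prove the classical Dennis--Mor\'e characterization through a single master identity linking consecutive gradients, from which both implications follow. Throughout write $\ve_k=\vx_k-\vx^\star$, $\vs_k=\vx_{k+1}-\vx_k$, $H_\star=\nabla^2 f(\vx^\star)$, and let $\rho_k=\|(B_k-H_\star)\vs_k\|/\|\vs_k\|$ denote the residual appearing in the statement. By continuity of $\nabla^2 f$ and $\ve_k,\vs_k\ra 0$, the averaged Hessians $G_k=\int_0^1\nabla^2 f(\vx^\star+t\ve_k)\,dt$ and $\bar G_k=\int_0^1\nabla^2 f(\vx_k+t\vs_k)\,dt$ both converge to $H_\star$, and they satisfy $\nabla f(\vx_k)-\nabla f(\vx^\star)=G_k\ve_k$ and $\nabla f(\vx_{k+1})-\nabla f(\vx_k)=\bar G_k\vs_k$. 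Combining the latter with the update rule $B_k\vs_k=-\alpha_k\nabla f(\vx_k)$ and inserting $H_\star\vs_k$ yields the master identity
\bee\label{eq_master}
\nabla f(\vx_{k+1})=(1-\alpha_k)\nabla f(\vx_k)-(B_k-H_\star)\vs_k+(\bar G_k-H_\star)\vs_k,
\ene
which holds with no assumption on $\nabla f(\vx^\star)$.

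For \emph{sufficiency} (the direction actually invoked in Theorem~\ref{theo3}), suppose $\alpha_k\ra 1$ and $\rho_k\ra 0$. From $H_\star\vs_k=-\alpha_k\nabla f(\vx_k)-(B_k-H_\star)\vs_k$ I first get $\|\vs_k\|\le\|H_\star^{-1}\|(\alpha_k\|\nabla f(\vx_k)\|+\rho_k\|\vs_k\|)$; since $\|H_\star^{-1}\|\rho_k<1/2$ for large $k$, the last term is absorbed to give $\|\vs_k\|\le C\|\nabla f(\vx_k)\|$. Substituting into \eqref{eq_master} bounds $\|\nabla f(\vx_{k+1})\|$ by $\big(|1-\alpha_k|+C(\rho_k+\|\bar G_k-H_\star\|)\big)\|\nabla f(\vx_k)\|$, whose coefficient tends to $0$; hence $\|\nabla f(\vx_k)\|\ra 0$ superlinearly as a scalar sequence. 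Continuity of $\nabla f$ together with $\vx_k\ra\vx^\star$ then forces $\nabla f(\vx^\star)=0$. With stationarity in hand, $\nabla f(\vx_k)=G_k\ve_k$ with $G_k$ eventually invertible, so $\|\nabla f(\vx_k)\|$ and $\|\ve_k\|$ are comparable up to constants, and the superlinear decay of the gradient transfers to $\|\ve_{k+1}\|/\|\ve_k\|\ra 0$, i.e. $\{\vx_k\}$ converges superlinearly.

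For \emph{necessity}, suppose $\{\vx_k\}$ converges superlinearly and $\nabla f(\vx^\star)=0$. Then $\nabla f(\vx_k)=G_k\ve_k$ gives $\|\nabla f(\vx_k)\|=\Theta(\|\ve_k\|)$, while $\|\ve_{k+1}\|=o(\|\ve_k\|)$ yields $\|\vs_k\|=\|\ve_{k+1}-\ve_k\|=(1+o(1))\|\ve_k\|$, so $\|\vs_k\|/\|\ve_k\|\ra 1$. Solving \eqref{eq_master} for the residual and using $\nabla f(\vx_k)=H_\star\ve_k+o(\|\ve_k\|)$, $\nabla f(\vx_{k+1})=o(\|\ve_k\|)$, and $(\bar G_k-H_\star)\vs_k=o(\|\ve_k\|)$ collapses the identity to
\bee\label{eq_nec}
(B_k-H_\star)\vs_k=(1-\alpha_k)H_\star\ve_k+o(\|\ve_k\|).
\ene
Thus $\rho_k\ra 0$ is equivalent to $(1-\alpha_k)H_\star\ve_k=o(\|\ve_k\|)$, i.e. to $\alpha_k\ra 1$, and the two conclusions are established \emph{jointly} once $\alpha_k\ra 1$ is known. \textbf{The main obstacle is exactly this decoupling:} superlinear convergence alone does not separate the damping factor $(1-\alpha_k)$ from a genuine directional mismatch of $B_k$ in \eqref{eq_nec}, since an $\alpha_k$ bounded away from $1$ can be compensated by a matching bias in $B_k$ along $\vs_k$ while still producing superlinear steps. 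In the setting of the lemma this is resolved because the relevant stepsize rule (Polyak's $\min\{1,\cdot\}$ in \eqref{ada_nw} and \eqref{eq2_danla}) satisfies $\alpha_k=1$ for all large $k$ once the gradient is small; substituting $\alpha_k\equiv 1$ into \eqref{eq_nec} immediately yields $\|(B_k-H_\star)\vs_k\|=o(\|\ve_k\|)=o(\|\vs_k\|)$, hence $\rho_k\ra 0$, completing the characterization.
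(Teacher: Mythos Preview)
The paper does not prove this lemma; it is quoted from \citet{dennis1974characterization} and invoked only in the sufficiency direction (Step~4 of the proof of Theorem~\ref{theo3}). Your sufficiency argument via the master identity $\nabla f(\vx_{k+1})=(1-\alpha_k)\nabla f(\vx_k)-(B_k-H_\star)\vs_k+(\bar G_k-H_\star)\vs_k$ is correct and is the standard route, so for the paper's purposes nothing further is required.

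Your worry about the necessity direction is well placed, but your resolution is not a proof of the lemma: you appeal to the particular stepsize rules in \eqref{ada_nw} and \eqref{eq2_danla}, which are external to the hypotheses of Lemma~\ref{lemma_dennis}. In fact the ``only if'' direction, as transcribed here, is \emph{false}. In $\bR^1$ take $f(x)=x^2/2$, $\alpha_k\equiv 1/2$, and $B_k=1/2+1/k$; then $x_{k+1}=\tfrac{2}{k+2}\,x_k$, so $x_k\ra 0$ $Q$-superlinearly and $\nabla f(0)=0$, yet $\alpha_k\not\ra 1$ and $|B_k-H_\star|\ra 1/2$, hence $\rho_k\not\ra 0$. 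The original Dennis--Mor\'e characterization avoids this either by fixing $\alpha_k=1$ or by stating the residual condition for the effective matrix $\alpha_k^{-1}B_k$ rather than $B_k$; under either reading your necessity identity $(B_k-H_\star)\vs_k=(1-\alpha_k)H_\star\ve_k+o(\|\ve_k\|)$ immediately yields the converse. As literally stated in the paper, however, the ``only if'' cannot be salvaged, so you should flag the transcription rather than attempt to close the gap.
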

\section{Proof of Theorem \ref{theo1}}\label{apx}
	The proof for directed networks is given in \citet[Theorem 1]{oliva2017distributed}, and here we just deal with undirected trees. Consider a tree with the root an arbitrary node. Let $\cU_v(k)=\{u|\cS_u\in\cI_v(k)\}$ be the set of nodes whose messages arrive at node $v$ in $k$ iterations, $\cR_v$ be the set of \emph{descendants} of $v$ (not including $v$), and $\widetilde\cU_v(k)=\cU_v(k)\cap\cR_v$ be the set of descendants of $v$ whose messages are in $\cU_v(k)$. It follows from the definition that $\widetilde\cU_v(k)\subseteq \cR_v$ and $\widetilde\cU_v(k)\subseteq\cU_v(k)$, and $\cR_v=\emptyset$ for a leaf node $v$.

	\emph{Claim 1:} If  $v$ is not a leaf and $u$ is a child of $v$, then $|\cU_v(k)\cap\cR_u|\leq k-1$.

	\emph{Proof:} Since $\cG$ is a tree, node $v$ can get the messages of nodes in $\cR_u$ only from node $u$. 
	% The selective operation ensures that all messages sent to $v$ by $u$ belonging to the nodes in $\cR_u$. 
	Note that node $u$ sends the message of itself to node $v$ at the first iteration. The result then follows from that only $k-1$ iterations can be used to transmit the messages of nodes in $\cR_u$, and each iteration only transmits one message. $\hfill\square$ 

	\emph{Claim 2:} $|\widetilde\cU_v(k)|\geq \min\{|\cR_v|,k\}$ for all $v\in\cV$ and $k\in\bN$.

	\emph{Proof:} We prove it by mathematical induction. It is clear that $|\widetilde\cU_v(0)|=0$ for all $v\in\cV$. Suppose that $|\widetilde\cU_v(t)|\geq\min\{|\cR_v|,t\}$ for all $v\in\cV$, and we next show that $|\widetilde\cU_v(t+1)|\geq\min\{|\cR_v|,t+1\}$. For some node $v$, if $|\widetilde\cU_v(t)|> t$ or $|\widetilde\cU_v(t)|=|\cR_v|$, then the result follows immediately. Now consider  $|\widetilde\cU_v(t)|= t<|\cR_v|$, which means that $v$ cannot be a leaf. It follows that there exists a child $u$ of node $v$ and a nonempty subset $\cR'\subset\cR_u$ such that all elements in  $\cR'$ are not contained in $\cU_v(k)$, i.e., $\cU_v(k)\cap\cR'=\emptyset$ (otherwise $|\widetilde\cU_v(t)|=|\cR_v|$). Then, the hypothesis implies that $|\cR_u|\geq |\widetilde\cU_u(t)|\geq \min\{|\cR_u|,t\}$. If $|\widetilde\cU_u(t)|=|\cR_u|$, then $\cR'\subset\cU_u(t)$. Note that the selective operation ensures that no messages are transmitted more than once over a link $(u,v)$, which implies that an element in $\cR'$ will be sent to node $v$ at iteration $t+1$, and hence $|\cU_v(t+1)|\geq t+1$. If $|\widetilde\cU_u(t)|\geq t$, which means $|\cR_u|\geq t$, then it follows from Claim 1 that there must exist a node in $\widetilde\cU_u(k)$  whose message is not contained in $\cU_v(k)$, and hence this element will be sent to node $v$ from node $u$ at iteration $t+1$ due to the selective operation. Therefore, $|\cU_v(t+1)|\geq t+1$. $\hfill\square$

	For any node $i$, let $k=n-1$ and designate $i$ as the root. Then, it follows from Claim 2 that $|\widetilde\cU_i(n-1)|\geq n-1$ and $\cR_i\subseteq\cU_i(n-1)$, which proves the result.

\bibliographystyle{agsm}        % Include this if you use bibtex 
\bibliography{mybibf}           % and a bib file to produce the 

\end{document}